\newtheorem{theorem}{Theorem}
\numberwithin{theorem}{section}
\newtheorem{proposition}[theorem]{Proposition}
\newtheorem{conjecture}[theorem]{Conjecture}
\newtheorem{corollary}[theorem]{Corollary}
\newtheorem{definition}[theorem]{Definition}
\newtheorem{remark}[theorem]{Remark}
\numberwithin{equation}{section}
\newtheorem{lemma}[theorem]{Lemma}
\begin{document}
\title{Bigrading the symplectic Khovanov cohomology}
\author{Zhechi Cheng}
\maketitle 

\begin{abstract}
We construct a well-defined relative second grading on symplectic Khovanov cohomology from holomorphic disc counting. We show that it recovers the Jones grading of Khovanov homology up to an overall grading shift over any characteristic zero field, through proving that the isomorphism of Abouzaid-Smith can be refined as an isomorphism between bigraded cohomology theories. We prove it by constructing an exact triangle of symplectic Khovanov cohomology that behaves similarly to the unoriented skein exact triangle for Khovanov homology. We use a version of symplectic Khovanov cohomology defined for bridge diagrams and obtain an absolute homological grading in this construction. 

\end{abstract}

\section{Introduction}
In \cite{SS04}, Seidel and Smith defined a singly graded link invariant \textit{symplectic Khovanov cohomology} $Kh^*_{symp}(L)$. It is the Lagrangian intersection Floer cohomology of two Lagrangians in a symplectic manifold $\mathscr{Y}_n$. The manifold $\mathscr{Y}_n$ is built through taking a fiber of the restriction of the adjoint quotient map $\chi:\mathfrak{sl}_{2n}(\mathbb{C})\to Conf^0_{2n}(\mathbb{C})$ to a nilpotent slice $\mathcal{S}_n$.

A given link $L$ in $S^3$ can be realized as a braid closure of $\beta_L\in Br_{n}$ for some $n$ depending on $L$. $\beta_L\times id \in Br_{2n}$ gives a path in the configuration space $Conf^0_{2n}(\mathbb{C})$. The parallel transport induces a symplectomorphism of $\mathscr{Y}_n$ to itself, (precisely speaking, an arbitrarily large compact subspace of $\mathscr{Y}_n$). There is a distinguished Lagrangian submanifold $\mathcal{K}$ given by iterated vanishing cycles and let 
$(\beta_L\times id)(\mathcal{K})$ be its image under the parallel transport. $Kh^*_{symp}(L)$ is defined to be the Floer cohomology group \begin{equation}Kh^*_{symp}(L)=HF^{*+n+w}(\mathcal{K},(\beta_L\times id)(\mathcal{K}))\end{equation}, where $w$ is the writhe of $\beta_L$. There is a conjectural relation between $Kh^*_{symp}$ and the combinatorial Khovanov homology $Kh^{*,*}$:

\begin{conjecture}[Seidel-Smith, \cite{SS04}]\label{conj:SS} For any link $L\subset S^3$, $Kh^k_{symp}(L)\cong \bigoplus_{i-j=k}Kh^{i,j}(L)$. 
\end{conjecture}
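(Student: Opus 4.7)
The plan is to refine the Abouzaid-Smith isomorphism, which identifies $Kh^*_{symp}(L)$ with $\bigoplus_{i-j=k} Kh^{i,j}(L;\mathbb{Q})$ as singly graded vector spaces, into an isomorphism that respects a second grading. The strategy has two parts: first, to equip $Kh^*_{symp}(L)$ with a relative second grading coming from symplectic geometry alone, and second, to prove that this grading matches the Jones grading under the Abouzaid-Smith isomorphism (up to an overall shift, and over any characteristic zero field as in that theorem).

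For the second grading, I would define, for any two generators $p, q \in \mathcal{K} \cap (\beta_L \times id)(\mathcal{K})$, the relative integer $\delta(p) - \delta(q)$ as (twice) the algebraic intersection number of a connecting Floer strip with a carefully chosen codimension-two divisor $D \subset \mathscr{Y}_n$ --- naturally taken to be the preimage of a suitable diagonal in the ambient eigenvalue configuration space. Well-definedness requires the intersection number to depend only on $p$ and $q$ and not on the strip; this reduces to showing that $[D]$ pairs trivially with the relevant class in $\pi_2$ of the Lagrangian pair $(\mathscr{Y}_n; \mathcal{K}, (\beta_L\times id)\mathcal{K})$. Compatibility with the Floer differential (so that $\delta$ descends to cohomology) then follows from a standard analysis of boundary strata in the moduli of $J$-holomorphic strips.

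With the grading in hand, I would prove link invariance working inside the bridge diagram formulation of symplectic Khovanov cohomology, which simultaneously yields a canonical absolute homological grading; this gives a bigrading on $Kh^*_{symp}(L)$. The comparison with Khovanov bigrading is then carried out by induction on crossings via a bigraded exact triangle for $Kh^*_{symp}$ associated to crossing resolutions. This triangle should be constructed from a Lagrangian surgery/Dehn twist relation between the Lagrangians corresponding to the two smoothings of a crossing, modeled directly on Khovanov's unoriented skein triangle. Once the triangle is shown to be bigraded with the same degree shifts as in the combinatorial setting, the induction reduces the desired isomorphism to the case of unlinks, where both sides agree by a direct computation.

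The main obstacle is to verify that the symplectic exact triangle genuinely matches the Khovanov unoriented skein triangle not merely as a triangle of vector spaces but as one of bigraded spaces, with the precise shifts that make the unoriented skein argument for combinatorial Khovanov homology go through. This amounts to computing the Maslov-type index and the change in $\delta$ along the connecting maps of the surgery cobordism, and reconciling these with the absolute homological grading coming from the bridge diagram. This bookkeeping, rather than the existence of the triangle or the construction of $\delta$, is where I expect the real work to lie.
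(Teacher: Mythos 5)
The statement you were asked to prove is Conjecture~1.1, the singly graded isomorphism $Kh^k_{symp}(L)\cong \bigoplus_{i-j=k}Kh^{i,j}(L)$. The paper does not prove this; it cites Abouzaid and Smith \cite{AS19} for the characteristic-zero case and then builds its own results (the bigraded Theorems~1.2--1.4) \emph{on top} of that citation. Your proposal, however, opens by taking ``the Abouzaid-Smith isomorphism, which identifies $Kh^*_{symp}(L)$ with $\bigoplus_{i-j=k} Kh^{i,j}(L;\mathbb{Q})$'' as the object to be refined. That isomorphism \emph{is} the conjecture (over a characteristic-zero field), so as a proof of Conjecture~1.1 your argument is circular: you assume the thing you are meant to establish and produce a refinement of it. What you have sketched is really a strategy for the paper's Theorem~1.3 (the bigraded statement), not a proof of Conjecture~1.1. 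Note also that the conjecture is phrased for arbitrary coefficients, while both \cite{AS19} and your proposal only address characteristic zero; the paper is explicit that outside characteristic zero the conjecture remains open.

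Setting aside the circularity, it is worth noting that your proposed construction of the second grading also diverges from what the paper actually uses. You define the relative grading by intersecting a connecting Floer strip with a codimension-two divisor, and you flag that well-definedness ``reduces to showing that $[D]$ pairs trivially with the relevant class in $\pi_2$.'' That pairing condition is exactly what fails to be automatic here, and it is precisely the obstruction that led Seidel--Solomon and Abouzaid--Smith to a more elaborate construction: the paper's weight grading comes from the eigenvalue decomposition of an endomorphism $\Phi$ built from the linear part of a non-commutative vector field (counting discs with two interior marked points constrained to $D_0$ and a linearly equivalent $D_0'$, corrected by the closed-string terms $CO(gw_1)$ and $co(\beta_0)$), together with a choice of equivariant structures on the Lagrangians. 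That grading a priori takes values in $\bar{\mathbf{k}}$ and its integrality has to be proved, whereas your intersection count would be integral by fiat but not obviously homotopy invariant. The downstream structure of your argument --- bridge diagrams to get an absolute homological grading, a bigraded unoriented skein exact triangle, induction on crossings reducing to unlinks --- does match the paper's Sections~2--4. But you would need to replace your definition of the second grading with the weight grading, and you would need to recast the whole thing as a refinement of, not a proof of, Conjecture~1.1.
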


This conjecture is true over any characteristic zero field, proved by Abouzaid and Smith in \cite{AS19}. For the cases of non-characteristic zero fields, only a few examples have been computed such that the theories are isomorphic, see \cite[Proposition 55]{SS04} for the case of trefoil
. 

Our results rely on the theorem of Abouzaid and Smith that Conjecture~\ref{conj:SS} is true over any characteristic zero field, so we assume the characteristic of the base field $\textbf{k}$ to be zero unless noted otherwise. 

It is also worth noting that we will be working with cohomology theories throughout the paper. Even if the Seidel-Smith invariant is called symplectic Khovanov homology in some contexts, it is of cohomological type, i.e. the differential raises the homological grading by $1$. Correspondingly, Khovanov homology is also a cohomology theory.



We will work with the framework of Manolescu's Hilbert scheme reformulation. The space $\mathscr{Y}_n$ can be embedded symplectically as an open subscheme into $Hilb^n(A_{2n-1})$, the $n$-th Hilbert scheme of the Milnor fibre of $A_{2n-1}$-surface singularity. 

One of the advantages of Manolescu's reformulation is that we can work with \textit{bridge diagrams} instead, which are decorated link diagrams obtained by breaking the link diagrams into $n$ pairwise disjoint $\alpha$-arcs and $n$ pairwise disjoint $\beta$-arcs such that $\beta$-arcs surpass $\alpha$-arcs at any intersection. These arcs give two Lagrangians $\mathcal{K}_\alpha$ and $\mathcal{K}_\beta$ in $\mathscr{Y}_n\subset Hilb^n(A_{2n-1})$. It is proved in \cite{M04} that $Kh^*_{symp}(L)=HF^{*+n+w}(\mathcal{K}_\alpha,\mathcal{K}_\beta)$ for a specific type of diagram, called a \textit{flattened braid diagram}, where $n$ is the number of strands and $w$ is the writhe of the corresponding braid. All braids can give rise to flattened braid diagrams but not all bridge diagrams are isotopic to flattened braid diagrams. 

Attempts were made to generalize symplectic Khovanov cohomology to arbitrary bridge diagrams, an $\mathbb{F}_2$-coefficients version by Hendricks-Lipshitz-Sarkar in \cite[Section 7]{HLS15} and a relatively graded version by Waldron in \cite[Section 6]{W09}. In this paper, we give an absolute grading to Waldron's construction as follow: 

\begin{theorem}\label{thm:bridge} For any oriented bridge diagram, let $w$ be the writhe of the diagram i.e. the number of positive minus the number of negative crossings, $rot$ be the rotation number of the diagram i.e. the number of counterclockwise minus the number of clockwise Seifert circles and $x_0$ be the generator whose coordinates are the starting point of each $\beta$-arcs. Then the Floer cohomology groups
\begin{equation} Kh^*_{symp}(L)=HF^{*+gr(x_0)+w+rot}(\mathcal{K}_\alpha,\mathcal{K}_\beta)
\end{equation}
are link invariants. 
\end{theorem}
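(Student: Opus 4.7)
The plan is to build on Waldron's existing result, which already equips $HF^*(\mathcal{K}_\alpha,\mathcal{K}_\beta)$ with the structure of a \emph{relatively} graded invariant of oriented bridge diagrams. Since Waldron handles the hard analytic content of invariance on the Floer side, the task reduces to checking that the proposed overall shift $gr(x_0)+w+rot$ changes by exactly the Maslov shift dictated by Waldron's isomorphism under every elementary move between bridge diagrams of the same oriented link.

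The first step is to fix a generating set of such moves. I would use the oriented Reidemeister moves together with the intrinsic bridge-diagram moves: planar isotopy of arcs, $\alpha$- and $\beta$-handle slides, and bridge stabilization/destabilization. For each such move I would separately compute four quantities: the Maslov shift $c$ coming from Waldron's identification of Floer complexes, the change $\Delta rot$ in rotation number, the change $\Delta w$ in writhe, and the change $\Delta gr(x_0)$ in the homological degree of the distinguished generator (traced through the induced chain-level identification, since a move typically replaces the $\beta$-arcs and hence the meaning of $x_0$). Invariance reduces to the identity $c=\Delta gr(x_0)+\Delta w+\Delta rot$ for each move. For moves whose support avoids the endpoints of the $\beta$-arcs, $\Delta gr(x_0)=0$ and one only needs to match $c$ with the topological quantity $\Delta w+\Delta rot$.

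To pin down the absolute constant, I would specialize to flattened braid diagrams, where $\mathcal{K}_\alpha=\mathcal{K}$ and $\mathcal{K}_\beta=(\beta_L\times id)(\mathcal{K})$ via Manolescu's embedding $\mathscr{Y}_n\hookrightarrow Hilb^n(A_{2n-1})$. A direct model computation on the trivial flattened braid diagram, where each $\beta$-arc sits immediately above the matching $\alpha$-arc, should yield $gr(x_0)+rot=n$, so that the theorem's formula specializes to the Seidel--Smith normalization $HF^{*+n+w}(\mathcal{K},(\beta_L\times id)(\mathcal{K}))$. This both fixes the absolute grading and confirms that the bridge-diagram construction genuinely extends the Seidel--Smith invariant.

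I expect the main obstacles to be the Reidemeister I move and bridge stabilization. Reidemeister I shifts $w$ and $rot$ each by $\pm 1$ and introduces a new crossing whose interaction with $x_0$ must be traced through an explicit local model of the kink; the required Maslov computation should reduce to a local index calculation in the Milnor fibre. Bridge stabilization is more global, since it increases the number of bridges and hence the dimension of the ambient Hilbert scheme, but after localizing near the introduced canceling pair of arcs it should also reduce to a model Floer computation. Once these two cases are settled, the remaining moves (Reidemeister II and III, planar isotopy of arcs, and handle slides) should be routine, since Waldron's relative invariance already encodes the corresponding chain-level isomorphisms and the topological quantities either do not change or change in compensating ways.
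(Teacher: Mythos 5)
Your plan follows essentially the same strategy as the paper: use Waldron's relative invariance as the analytic backbone and then verify, move by move, that the correction term $gr(x_0)+w+rot$ compensates exactly for the shift in Waldron's identification of Floer complexes. The paper organizes the moves as bridge-diagram isotopy (which houses the Reidemeister~I and~II analyses; RIII is then a combination of the other moves), stabilization, and handleslide, and this is the same territory your list covers. Your proposed normalization against the trivial flattened braid diagram to recover Manolescu's $n+w$ convention is also sound and mirrors the remark the paper makes for braid-closure diagrams.

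Where your proposal misjudges the landscape is in predicting which moves carry the real content. You single out bridge stabilization as a main obstacle, but in the paper this is the shortest argument of the three: $w$ and $rot$ are visibly unchanged and Waldron's stabilization isomorphism is a chain-level map under which $gr(x_0)$ is preserved. Conversely, you dismiss handleslides as routine ``since Waldron's relative invariance already encodes the corresponding chain-level isomorphisms and the topological quantities either do not change or change in compensating ways,'' but that last clause is precisely what has to be proved and is where the paper does the most work. A handleslide of $\alpha_2$ over $\alpha_1$ introduces a whole family of new crossings along the connecting path and around the boundary circle of $\alpha_1$, and computing $\Delta rot$ requires tracking Seifert circles through a sign-based case analysis ($++$, $+-$, $-+$, $--$) of consecutive $\beta$-arcs, separately near the path $\gamma$ and near $\alpha_2$, and then matching up the contributions at the junction. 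Showing $\Delta w=0$ and $\Delta gr(x_0)=0$ also takes dedicated arguments. So while the framework you lay out would get you there, the handleslide case should be treated as the heart of the proof rather than a routine consequence of Waldron's isomorphism.
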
 

As Waldron proved in the relative case, see \cite[Theorem 1.1]{W09}, the absolutely graded invariant defined with bridge diagram is also canonical, i.e. we Floer groups from two equivalent bridge diagrams are related through a canonical isomorphism. 

The orientation of the diagram, especially for a link diagram, is crucial in computing the correction terms and locating $x_0$ (that there exactly are two generators whose coordinates are all endpoints and the choice depends on the orientation). Throughout this paper, we always assume that our bridge diagrams are oriented.


Abouzaid and Smith constructed an endomorphism $\phi$ of $CF^*(\mathcal{K}_\alpha,\mathcal{K}_\beta)$, which induces a generalized eigenspace decomposition of $HF^*(\mathcal{K}_\alpha,\mathcal{K}_\beta)$, see also \cite{AS16}. The eigenvalues give an additional grading on $HF^*(\mathcal{K}_\alpha,\mathcal{K}_\beta)$, called the \textit{weight grading}. 
We will only use a relative version of the weight grading because an absolute grading relies on choices of auxiliary data in symplectic geometry, called \textit{equivariant structures}, but we will not specify our choices of equivariant structures in this paper. We prove that for any bridge diagram, the relative weight grading recovers the Jones grading (or quantum grading in some contexts) of Khovanov cohomology

\begin{theorem}\label{thm:main} Symplectic Khovanov cohomology and Khovanov homology are isomorphic as bigraded vector spaces over any characteristic zero field, where the gradings are related by $k=i-j$ and $wt=-j+c$, where $k$ is the homological grading, $wt$ is the weight grading and $c$ is a correction term of the relative weight grading. \end{theorem}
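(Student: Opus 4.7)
The plan is to proceed by induction on the number of crossings of an oriented bridge/link diagram, using a skein-type exact triangle on the symplectic side that mirrors the unoriented skein exact triangle for Khovanov homology. Since Abouzaid-Smith give an abstract isomorphism of singly graded vector spaces, the real content is to upgrade this isomorphism to one that intertwines the weight grading with the Jones grading (up to the correction term $c$) and the Floer homological grading with $i-j$. The natural tool is a five-lemma style argument running in parallel on the two sides of a skein triangle.

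First, I would set up the base case: for the unknot, realized as a one-bridge diagram, compute $HF^*(\mathcal{K}_\alpha,\mathcal{K}_\beta)$ together with the decomposition into generalized eigenspaces of the Abouzaid-Smith endomorphism $\phi$, and verify by direct inspection that the two generators sit in weight gradings differing by the amount predicted by the Jones grading on $Kh(\text{unknot}) \cong \textbf{k}\oplus\textbf{k}$. Theorem~\ref{thm:bridge} supplies the absolute homological normalization needed for this to match $k=i-j$; the relative weight grading determines $wt = -j + c$ up to a single global shift, absorbed into $c$.

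For the inductive step, I would construct, for each crossing in a bridge diagram, a long exact triangle relating $CF^*(\mathcal{K}_\alpha,\mathcal{K}_\beta)$ of the diagram to the two resolutions at that crossing, obtained by modifying the relevant $\beta$-arc. The mechanism is Seidel's exact triangle for a Dehn twist applied to the vanishing-cycle Lagrangian, or equivalently a surgery triangle at the level of $\beta$-arcs in the Hilbert scheme. I then need two compatibilities: (i) the connecting maps shift the homological grading by the same amount as the Khovanov skein triangle shifts $i-j$, and (ii) these maps are homogeneous of the correct degree with respect to the weight grading, matching the shift of $-j$ under the combinatorial skein triangle. Granted (i) and (ii), the five lemma, applied inductively on the number of crossings, promotes the Abouzaid-Smith isomorphism to a bigraded isomorphism up to the overall constant $c$.

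The main obstacle is verifying (ii): the weight grading is defined via eigenvalues of $\phi$, not directly from diagrammatics, so I must show that the chain-level maps in the symplectic skein triangle intertwine $\phi$ with the corresponding endomorphisms on the resolutions up to a predictable scalar shift on each piece. This forces a careful analysis of how $\phi$ is constructed from holomorphic disc counts involving the divisor at infinity of $\mathscr{Y}_n$, and of how those disc counts transform under the surgery producing the triangle. Once the weight shifts are pinned down on each term of the triangle and shown to agree with the Khovanov side up to a global constant, the identification $wt=-j+c$ follows; the correction $c$ is then simply the difference between the two normalizations fixed in the base case.
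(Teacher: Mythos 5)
Your overall strategy---induction on the crossing number, a base case at a crossingless diagram, a symplectic skein triangle paralleling the unoriented Khovanov skein triangle, and a five-lemma argument to promote the Abouzaid--Smith isomorphism to a bigraded one---matches the paper's proof. Two things are worth flagging.

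First, a small point on the base case: you need crossingless diagrams for \emph{unlinks} of arbitrarily many components, not just the one-bridge unknot. Repeated resolution of a bridge diagram terminates at unlinks, and the paper's base case is the Abouzaid--Smith purity result, which produces equivariant structures making $wt = gr$ on any crossingless diagram.

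Second, and more importantly, your mechanism for step (ii) is not what makes the argument go through, and as formulated it would be hard to carry out. You propose to show that ``the chain-level maps in the symplectic skein triangle intertwine $\phi$ with the corresponding endomorphisms on the resolutions up to a predictable scalar shift.'' But $\phi$ is only defined up to an additive constant (a choice of equivariant structure on each Lagrangian), and the triangle maps do not commute with $\phi$ in any naive sense; rather, they \emph{shift} the weight. The paper's route is algebraic and avoids re-analyzing disc counts after surgery: the connecting maps of the exact triangle are Floer products $\mu^2(\cdot, c_i)$ with explicit cocycles $c_1, c_2, c_3$ living in the pairwise Floer groups of the three resolution Lagrangians; the weight grading satisfies a Leibniz rule $wt(\mu^2(x,y)) = wt(x)+wt(y)$ (proved by degenerating a moduli space with two interior marked points); each pair of the three resolution curves is itself a crossingless unlink diagram, so those Floer groups are pure and the $c_i$ have unambiguous weight; and finally Seidel's algebraic characterization of exact triangles gives $\mu^3(c_1,c_3,c_2) = e_{\mathcal{K}_\beta}$ (after showing $h_1 = h_2 = 0$ by a parity-of-grading argument), which forces $wt(c_1)+wt(c_2)+wt(c_3) = 0$. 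Without identifying this chain of facts---that the triangle maps are products, that the $c_i$'s weights are pinned down by purity of the intermediate unlink groups, and that their sum vanishes---the induction has no way to keep track of the cumulative weight shift as you resolve crossings, and ``predictable scalar shift'' remains unsubstantiated.
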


By definition, the weight grading lives in $\bar{\textbf{k}}$, the algebraic closure of the base field. The theorem implies that the weight grading is integral. At the writing of this paper, the author does not know the correction term $c$ to define an absolute weight grading. Precisely speaking, we need to make a specific choice on the equivariant structures on the Lagrangians depending on the writhe, crossing number, and other properties of the bridge diagram. 

To prove Theorem~\ref{thm:main}, we show that Abouzaid-Smith long exact sequence of symplectic Khovanov cohomology groups, see \cite[Equation 7.9]{AS19} decomposes with respect to the weight grading. In other words, if we fix a weight grading $wt_1$ of the first group, the only non-trivial map can happen between a single weight grading $wt_2$ of the second group and $wt_3$ of the third group. 

\begin{equation}\ldots\to HF^{*,wt_1}(L_+)\to HF^{*,wt_2}(L_0)\to HF^{*+2,wt_3}(L_\infty)\to HF^{*+1,wt_1}(L_+)\to \ldots \end{equation}

In the singly-graded case, the isomorphism of Abouzaid and Smith between symplectic Khovanov cohomology and Khovanov homology can be used in building a commutative diagram between the exact triangle above and the exact triangle of unoriented skein relation in Khovanov homology. We show the maps between the exact triangles given by the isomorphism of Abouzaid-Smith are bigraded by induction on the number of crossings. Abouzaid-Smith purity result \cite[Theorem 1.1]{AS16} leads to a computation for crossingless diagram of an unlink:

\begin{proposition}[Abouzaid-Smith, \cite{AS16}] If $L\subset S^3$ is an unlink represented by a crossingless diagram, there exists a choice of equivariant structures on Lagrangians such that for any element $x\in Kh^k_{symp}(L)$, $wt(x)=k$. 
\end{proposition}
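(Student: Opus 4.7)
The plan is to invoke the Abouzaid--Smith purity theorem \cite[Theorem 1.1]{AS16} in the crossingless setting. For a crossingless bridge diagram of an $n$-component unlink $U^n$, the $\alpha$- and $\beta$-arcs are pairwise parallel, so after a small Hamiltonian perturbation $\mathcal{K}_\beta$ is a graphical pushoff of $\mathcal{K}_\alpha$ inside a Weinstein neighbourhood in $\mathscr{Y}_n$. A standard Morse--Bott degeneration then identifies $CF^*(\mathcal{K}_\alpha,\mathcal{K}_\beta)$ with the Morse complex of a perfect Morse function on the clean intersection locus $\mathcal{K}_\alpha \cong (S^2)^n$. This yields a rank-$2^n$ group supported in homological degrees $-n,-n+2,\ldots,n$, matching $Kh^*_{symp}(U^n)$ under the grading convention of Theorem~\ref{thm:bridge}.

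Next I would analyze the action of $\phi$ on this Morse--Bott model. The content of the purity theorem is that for this class of Lagrangian pairs all pseudoholomorphic contributions to $\phi$ vanish and $\phi$ reduces to (classical) cup product with a fixed cohomology class on $\mathcal{K}_\alpha$. A direct calculation on $(S^2)^n$ shows that the eigenvalue of $\phi$ on a generator of homological degree $k$ equals $k + c_0$, where $c_0$ is a single integer constant determined by the equivariant structures fixed on $\mathcal{K}_\alpha$ and $\mathcal{K}_\beta$. Equivariant structures shift $\phi$ by integer multiples of the identity, so one can always pick them so that $c_0 = 0$, giving $wt(x) = k$ on the nose for every generator.

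The main obstacle is the simultaneous vanishing of the disc contributions to $\phi$ and the explicit identification of $\phi$ with a cup product operator in the Morse--Bott model; this is precisely the technical content of \cite[Theorem 1.1]{AS16} and is simply quoted here. Once that input is in place, both the weight spectrum and the homological grading are read off from the classical cohomology of $(S^2)^n$, and matching them up to an integer shift that can be absorbed by equivariant data is a formal verification.
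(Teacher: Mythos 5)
The paper does not prove this proposition; it is simply quoted as Abouzaid--Smith's purity result (\cite[Theorem 1.1]{AS16}, rephrased in the body of the paper as \cite[Proposition 6.11]{AS16}), so the only content supplied by the paper is the observation that any constant offset in the eigenvalues can be absorbed by re-choosing equivariant structures, since $H^0(\mathcal{K})\cong\textbf{k}$. Your proposal correctly identifies both of these ingredients — cite the AS16 purity theorem, then normalize away the constant — so it matches the paper's treatment.

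Two small cautions on the color you add. First, the modifier ``integer multiples of the identity'' is off: the ambiguity in equivariant structures is an affine space modeled on $H^0(\mathcal{K})\cong\textbf{k}$, so the allowed shifts are by arbitrary scalars in $\textbf{k}$, not just integers — which of course only makes the normalization easier. Second, the phrase ``Morse--Bott degeneration on the clean intersection locus'' is a bit misleading here: for a crossingless diagram $\mathcal{K}_\alpha$ and $\mathcal{K}_\beta$ already intersect only over the $2n$ endpoints (no interior intersections), so after the standard Weinstein perturbation the chain complex simply has $2^n$ transverse generators with vanishing differential; there is no nontrivial Morse--Bott manifold to degenerate. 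The computation $HF^*\cong H^*((S^2)^n)$ is correct, just not via a Morse--Bott strata argument. Finally, your description of the purity mechanism (``all pseudoholomorphic contributions to $\phi$ vanish and $\phi$ reduces to cup product'') is a plausible informal gloss, but the actual argument in \cite{AS16} proceeds via formality of the symplectic arc algebra and an analysis of the Seidel--Solomon-type operator, not a direct vanishing statement; you do flag that you are quoting the theorem rather than reproving it, so this does not affect the logic.
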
 
This finishes the proof of Theorem~\ref{thm:main}. Our argument so far is diagrammatic, we have not proved that the relative weight grading is independent of bridge diagrams yet. Now that we know the relative weight grading recovers the Jones grading up to an overall grading shift for any diagram and the fact that Jones grading is independent of link diagrams, we prove a conjecture of Abouzaid-Smith, 

\begin{theorem} \label{thm:relative} The relative weight grading on $Kh^*_{symp}(L)$ is independent of the choice of link diagram. 
\end{theorem}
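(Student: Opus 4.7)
The plan is to deduce Theorem~\ref{thm:relative} as a direct corollary of Theorem~\ref{thm:main}, together with the fact that the Jones grading on combinatorial Khovanov homology is a topological invariant of the link. Since Theorem~\ref{thm:main} identifies the weight grading arising from any bridge diagram $D$ with $-j + c(D)$ under a bigraded isomorphism to $Kh^{*,*}(L)$, relative weight differences $wt(x) - wt(y)$ translate into the relative Jones differences $j(y) - j(x)$, which are manifestly diagram-independent.

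Concretely, I would fix two bridge diagrams $D_1$ and $D_2$ for $L$ and consider the canonical link-invariance isomorphism
\begin{equation*}
\Phi: Kh^*_{symp}(L;D_1) \longrightarrow Kh^*_{symp}(L;D_2)
\end{equation*}
supplied by Theorem~\ref{thm:bridge} (and the canonicity clause inherited from Waldron). Theorem~\ref{thm:main} furnishes bigraded isomorphisms $\Psi_i: Kh^*_{symp}(L;D_i) \to Kh^{*,*}(L)$ with $wt_i = -j + c(D_i)$ for $i=1,2$. The argument then reduces to showing that the composition $\Psi_2 \circ \Phi \circ \Psi_1^{-1}$ preserves the Jones grading on $Kh^{*,*}(L)$; once that is in place, $\Phi$ must shift $wt_1$ by the constant $c(D_2) - c(D_1)$, so all relative weight differences are preserved under $\Phi$.

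The hard part is establishing that $\Psi_2 \circ \Phi \circ \Psi_1^{-1}$ is Jones-graded. The most natural approach is to imitate the inductive scheme used for Theorem~\ref{thm:main}: in the base case of a crossingless unlink diagram, the Abouzaid-Smith purity proposition pins down both $\Psi_1$ and $\Psi_2$ after matching equivariant structures, so that the Jones-grading compatibility is immediate; the inductive step then propagates this compatibility through the weight-graded exact triangle, since the symplectic triangle intertwines with the Khovanov skein triangle at each stage. A more conceptual alternative is to observe that Theorem~\ref{thm:main} matches the dimensions of weight-graded pieces of $Kh^*_{symp}$ with those of Jones-graded pieces of $Kh^{*,*}(L)$ in each homological degree, and that the Jones decomposition of $Kh^{*,*}(L)$ is intrinsic to $L$; together these force the weight decomposition of $Kh^*_{symp}(L)$ to be intrinsic up to a single overall shift. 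In either formulation, the essential input is the diagram-independence of the Jones grading in the target theory, which lets us transport a priori distinct weight decompositions from $D_1$ and $D_2$ to a single canonical decomposition of $Kh^*_{symp}(L)$.
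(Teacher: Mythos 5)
Your opening paragraph is exactly the paper's proof: Theorem~\ref{thm:main} shows that for any bridge diagram the relative weight grading coincides with $-j$ up to an overall shift, and since the Jones grading $j$ is a diagram-independent invariant of $L$, so is the relative weight grading. That is the whole argument in the paper, stated in three sentences.

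The rest of your proposal pursues a stronger claim than the paper asserts, namely that the \emph{canonical identification} $\Phi$ between $Kh^*_{symp}(L;D_1)$ and $Kh^*_{symp}(L;D_2)$ respects the weight decomposition. The paper's statement and proof operate at the level of the abstract bigraded isomorphism type: both relative weight decompositions are transported (via the bigraded isomorphisms of Theorem~\ref{thm:main}) to the intrinsic Jones decomposition, so the two relative gradings agree as gradings, without asking whether $\Psi_2\circ\Phi\circ\Psi_1^{-1}$ is the identity or even Jones-graded. You correctly flag that proving Jones-compatibility of $\Psi_2\circ\Phi\circ\Psi_1^{-1}$ would require a separate induction or an appeal to an eigenspace-type rigidity argument; neither is carried out in the paper, and your ``more conceptual alternative'' (matching dimensions of graded pieces) would not by itself pin down a decomposition. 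None of this is needed for the theorem as the paper states it, so your proof is not wrong, but it is doing extra work: the first paragraph already suffices, and the subsequent paragraphs address a refinement that the paper leaves open.
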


It is worth noting that the proof of Theorem~\ref{thm:relative} is not internal to symplectic geometry, and the invariance of the relative weight grading relies on the well-definition of the Jones grading in combinatorial Khovanov homology.

\textbf{Organization:} The paper is organized as follows: In Section~\ref{ch2}, we review the definition of symplectic Khovanov cohomology and construct an absolute grading on symplectic Khovanov cohomology of bridge diagrams. In Section~\ref{ch3}, we give a precise definition of the weight grading and construct a bigraded unoriented skein exact triangle of symplectic Khovanov cohomology. In Section~\ref{ch5}, we prove the main theorem by showing Abouzaid-Smith's isomorphism between symplectic Khovanov cohomology and combinatorial Khovanov homology preserves the second grading. 

\textbf{Acknowledgement:} The author would like to thank his thesis advisor Mohammed Abouzaid for his guidance throughout this project. He also thanks Kristen Hendricks, Mikhail Khovanov, Francesco Lin, Robert Lipshitz, and Ivan Smith for many helpful discussions on the subject. Finally, he thanks Columbia University for its supportive studying environment during the early stage of writing the paper and Institut Mittag-Leffler for its hospitality during the later stage of preparation of this paper. The author was partially supported by his thesis advisor's NSF grants DMS-1609148, and DMS-1564172, and by Institut Mittag-Leffler Junior Fellowship.

\section{A review of symplectic Khovanov cohomology $Kh^{*}_{symp}(L)$}\label{ch2}
We will briefly review the original definition of symplectic Khovanov cohomology and give a formal definition of symplectic Khovanov cohomology of a bridge diagram in Section~\ref{3.2}. We will discuss the homological grading in Section~\ref{3.3}. The construction of this section is not restricted to characteristic zero fields.

\subsection{Symplectic Khovanov cohomology for bridge diagrams}\label{3.2}

In \cite{SS04}, the link invariant $Kh^*_{symp}(L)$ is first introduced by Seidel and Smith as the Lagrangian intersection Floer cohomology of two Lagrangians in $\mathscr{Y}_n$, constructed as a nilpotent slice in $\mathfrak{sl}_{2n}(\mathbb{C})$. But it is defined through a link diagram presented as a braid closure and the Floer cohomology is computed through the braid action on $\mathscr{Y}_n$. Manolescu introduced a reformulation using Hilbert schemes in \cite{M04}, which is easier to visualize and more similar to other low dimensional invariants, such as Heegaard Floer homology. In this subsection, Floer homology groups are relatively graded. 

Let us start with Hilbert schemes of points on surfaces. We choose our algebraically closed field to be $\textbf{k}=\mathbb{C}$ and $X$ to be a complex variety. The Hilbert scheme of $n$ points on $X$, $Hilb^n(X)$ is defined to be closed 0-dimensional subschemes of $X$ of length $n$. An important part of this variety is a subvariety consisting of $n$ distinct points but its diagonal where points collide together is really complicated. However, we have the following \textit{Hilbert-Chow morphism} from \cite{N99}:
\begin{proposition} The Hilbert-Chow morphism $\pi$ is a natural morphism from the Hilbert scheme of $n$ points on $X$ to the $n$-fold symmetric product of $X$ such that
\begin{equation} \pi(Z)=\sum_{x\in X} length(Z_x)[x]
\end{equation}
Moreover, if $X$ is complex 1-dimensional, then $\pi$ is an isomorphism. If $X$ is complex 2-dimensional, then $\pi$ is a resolution of singularities and $Hilb^n(X)$ is smooth. 
\end{proposition}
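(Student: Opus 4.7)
The plan is to construct $\pi$ as a morphism of schemes, verify the pointwise formula, and then handle the one- and two-dimensional cases separately. For the construction, I would work affine-locally: on an open $U=\operatorname{Spec}(A)\subset X$, the universal subscheme $\mathcal{Z}\subset Hilb^n(U)\times U$ is finite and flat of degree $n$ over $Hilb^n(U)$, so its pushforward is a rank-$n$ vector bundle $\mathcal{E}$ on $Hilb^n(U)$. Multiplication by each $f\in A$ gives an $\mathcal{O}$-linear endomorphism of $\mathcal{E}$, and the coefficients of its characteristic polynomial assemble into a ring map $A\to\Gamma(Hilb^n(U),\mathcal{O})$ that factors through the $S_n$-invariants $(A^{\otimes n})^{S_n}=\Gamma(\operatorname{Sym}^n(U),\mathcal{O})$. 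This defines the Hilbert-Chow morphism, and gluing over an affine cover yields it globally. On closed points, the eigenvalues of multiplication by $f$ on $\mathcal{O}_{Z,x}$ are $f(x)$ repeated $\operatorname{length}(\mathcal{O}_{Z,x})$ times, reproducing the stated formula; naturality is immediate from the construction.

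For the case $\dim_{\mathbb{C}} X=1$, smoothness of $X$ forces each $\mathcal{O}_{X,x}$ to be a discrete valuation ring whose only length-$\ell$ Artinian quotient is $\mathcal{O}_{X,x}/\mathfrak{m}_x^{\ell}$. Hence any effective $0$-cycle $\sum \ell_x[x]$ of degree $n$ determines a unique length-$n$ subscheme, so $\pi$ is bijective on closed points. To promote this to an isomorphism of schemes, I would either verify that the tangent map is an isomorphism at every point---using that both $Hilb^n(X)$ and $\operatorname{Sym}^n(X)$ are smooth of dimension $n$ in the curve case---or construct an inverse morphism $\operatorname{Sym}^n(X)\to Hilb^n(X)$ from the universal property of $Hilb^n(X)$ applied to the tautological family of degree-$n$ divisors over $\operatorname{Sym}^n(X)$.

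For the case $\dim_{\mathbb{C}} X=2$, smoothness of $Hilb^n(X)$ is Fogarty's theorem. By deformation theory the tangent space at $[Z]$ is $\operatorname{Hom}_{\mathcal{O}_X}(\mathcal{I}_Z,\mathcal{O}_Z)$, and the strategy is to show this has dimension exactly $2n$ everywhere. Over the open locus where $Z$ is reduced this is automatic, since there $Hilb^n(X)$ is \'etale-locally $(X^n\setminus\Delta)/S_n$; for non-reduced $Z$ one decomposes $Z$ into its connected punctual components and reduces to subschemes supported at a single point, then invokes the local analysis of the punctual Hilbert scheme of $\mathbb{C}^2$ (irreducible of the expected dimension) to close the count. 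Once smoothness is in hand, $\pi$ is proper, restricts to an isomorphism on the dense open of reduced cycles, and $\operatorname{Sym}^n(X)$ is singular along its big diagonal for $n\geq 2$, so $\pi$ is a resolution of singularities. The main obstacle is Fogarty's smoothness theorem itself: uniformly bounding $\dim \operatorname{Hom}(\mathcal{I}_Z,\mathcal{O}_Z)\leq 2n$ at subschemes with long non-reduced punctual components is the substantive step, and it is precisely this bound that fails in $\dim_{\mathbb{C}} X\geq 3$, where $Hilb^n(X)$ is generically singular.
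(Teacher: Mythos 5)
This proposition is quoted from Nakajima's lectures~\cite{N99} and the paper gives no proof of its own, so there is nothing internal to compare your argument against; I will evaluate it on its own terms. Your outline is the standard one and is essentially sound: the affine-local construction of $\pi$ via a finite flat degree-$n$ sheaf, the DVR argument plus inverse-from-the-universal-divisor (or tangent-space count) in dimension one, and the reduction of the dimension-two case to Fogarty's smoothness theorem together with the observation that $\pi$ is proper, birational onto the normal variety $\operatorname{Sym}^n(X)$, and an isomorphism over the reduced locus.

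One imprecision worth fixing in the construction: you say the characteristic-polynomial coefficients ``assemble into a ring map $A\to\Gamma(Hilb^n(U),\mathcal O)$ that factors through $(A^{\otimes n})^{S_n}$,'' but what is actually needed is a ring homomorphism
\[
(A^{\otimes n})^{S_n}=\Gamma\bigl(\operatorname{Sym}^n(U),\mathcal O\bigr)\longrightarrow \Gamma\bigl(Hilb^n(U),\mathcal O\bigr),
\]
going in the opposite direction and not factoring through $A$ at all (the assignment $f\mapsto\operatorname{tr}(\cdot f)$ is $A$-linear, not multiplicative). The correct statement is that $(A^{\otimes n})^{S_n}$ is generated by the symmetric functions $\sigma_k(f)$ of $f\in A$, and one sends $\sigma_k(f)$ to the $k$-th coefficient of the characteristic polynomial of multiplication by $f$ on $\mathcal E$; verifying that this is well-defined and multiplicative is exactly Grothendieck's norm (or Iversen's/Deligne's) construction. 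With that adjustment, and granting Fogarty's theorem as a black box as you do, the sketch is correct.
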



Now we specify our complex surface. Consider the following complex surface \begin{equation}S=\{(u,v,z)\in \mathbb{C}^3|u^2+v^2+p(z)=0\}\in \mathbb{C}^3\end{equation}, where $p(z)=(z-p_1)\ldots(z-p_{2n})$. This is isomorphic to a fibre of nilpotent slice of block size $(1,2n-1)$ with eigenvalues $p_i$ in Seidel-Smith construction. Seidel-Smith invariant is defined in the nilpotent slice $\mathscr{Y}_n$ of block size $(n,n)$, which is proved to be an open subscheme of $Hilb^n(S)$, by Manolescu. Denote its complement 
\begin{equation} D_r=Hilb^n(S)\backslash \mathscr{Y}_n
\end{equation}
which is a complex co-dimension 1 subvariety and, in fact, a relative Hilbert subscheme given by all elements with length less than $n$. 

To fully characterize $\mathscr{Y}_n$, consider a projection $i:S\to\mathbb{C}$ such that $i(u,v,z)=z$, then 
\begin{equation}\mathscr{Y}_n=\{I\in Hilb^n(S)|i(I)\text{ has length }n\}
\end{equation} Manolescu proved in \cite[Proposition 2.7]{M04} that $\mathscr{Y}_n$ is biholomorphic to the space $\mathscr{Y}_{n,\tau}$ obtained by a fibre of nilpotent slice by Seidel-Smith in \cite{SS04}. 

We need to construct Lagrangians from a link diagram. A \textit{bridge diagram} $D$ for a link $L$ is a triple $(\vec{\alpha},\vec{\beta},\vec{p})$, where $\vec{p}=(p_1, p_2,\ldots, p_{2n})$ are $2n$ distinct points in $\mathbb{R}^2$, $\vec{\alpha}=(\alpha_1, \alpha_2,\ldots, \alpha_n)$ are $n$ pairwise disjoint embedded arcs and $\vec{\beta}=(\beta_1, \beta_2,\ldots, \beta_n)$ are also pairwise disjoint embedded arcs such that $\partial(\cup \alpha_i)=\partial(\cup \beta_i)=\{p_1,\ldots,p_{2n}\}$ and if we let the $\beta$ arcs surpass the $\alpha$ arcs at the intersections in $\mathbb{R}^3$, we get $L$. 

For each arc $\alpha_i$ or $\beta_i$, we can associate a Lagrangian sphere $\Sigma_{\alpha_i}$ or $\Sigma_{\beta_i}$ in S through the following equation: 
\begin{equation}\Sigma_{\alpha_i}=\{(u,v,z)\in S| z\in \alpha_i, u, v \in\sqrt{-p(z)}\mathbb{R}\}\end{equation} \begin{equation}\Sigma_{\beta_i}=\{(u,v,z)\in S| z\in \beta_i, u, v \in\sqrt{-p(z)}\mathbb{R}\}\end{equation} 

For each interior point of the arc, we have an $S^1$, while each end point gives a point. So $\Sigma_{\alpha_i}$ and $\Sigma_{\beta_i}$ are copies of $S^2$ and it is easy to see that they are Lagrangians for an appropriate choice of K\"ahler form, see \cite[Section 4]{M04}. 

These spheres enable us to build two Lagrangians in $Hilb^n(S)$ and, in fact, in $\mathscr{Y}_n$ by 
\begin{equation}\mathcal{K}_\alpha=\Sigma_{\alpha_1}\times\Sigma_{\alpha_2}\times\ldots\times\Sigma_{\alpha_n}
\end{equation} 
\begin{equation}\mathcal{K}_\beta=\Sigma_{\beta_1}\times\Sigma_{\beta_2}\times\ldots\times\Sigma_{\beta_n}
\end{equation}

It is worth pointing out that $\mathcal{K}_\alpha$ and $\mathcal{K}_\beta$ do not intersect transversely. The intersections of these spheres are points in the fibers of the endpoints of the $\alpha$ arcs and $\beta$ arcs and circles $S^1$ in the fibers of interior intersections of those arcs. Thus, we could have some tori as intersections. To deal with this, we should perturb one of the Lagrangians, see \cite[section 6.1]{M04}, or use Floer theory with clean intersections, like \cite{P99}

Because $\mathcal{K}_\beta$ is a product $\Sigma_{\beta_1}\times\Sigma_{\beta_2}\times\ldots\times\Sigma_{\beta_n}$, it will be easier to perturb each $\Sigma_\beta$ in the complex surface $S$. Let $N=\Sigma_\alpha\cap\Sigma_\beta$ be the portion of the intersection consisting of copies of $S^1$ intersections in $S$ and $V$ be a neighborhood of $N$. Then according to Weinstein \cite{W73}, we use the standard height function on $N$ as a Morse-Smale function that is required by the context. Then $\Sigma_\beta$ can be isotopied into $\Sigma_\beta'$ such that they are identical outside of V and intersects $\Sigma_\beta\cap V$ exactly at the maximum and the minimum of our height function. The resulting Floer cochain $CF^*(\Sigma_\alpha,\Sigma_\beta')$ will be quasi-isomorphic to $CF^*(\Sigma_\alpha,\Sigma_\beta)$. 

From now on, we always treat $\mathcal{K}_\beta$ as the original Lagrangian perturbed whenever we consider Lagrangian intersections, so it will intersect transversely with $\mathcal{K}_\alpha$ at isolated points. So each intersection at the interior of arcs $\alpha_i$ and $\beta_j$ now gives the intersection of $\Sigma_{\alpha_i}$ and $\Sigma_{\beta_j}$ at two points instead of a circle. 


\begin{proposition}\label{prop:rela} $($\cite[Theorem 1.2]{M04}, \cite[Theorem 1.1 and Theorem 4.12]{W09}$)$ For any bridge diagram $D$, The Floer cohomology $HF^*(\mathcal{K}_\alpha,\mathcal{K}_\beta)$ in $\mathscr{Y}_n=Hilb^n(S)\backslash D_r$ is canonically isomorphic to Seidel-Smith symplectic Khovanov homology $Kh^*_{symp}(L_D)$, where $L_D$ is the link represented by the bridge diagram. 
\end{proposition}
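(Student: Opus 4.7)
The plan is to reduce the statement to two components: first, the comparison between Manolescu's Hilbert-scheme model and the original Seidel-Smith model for a \emph{distinguished} class of diagrams (flattened braid diagrams), and second, the invariance of $HF^*(\mathcal{K}_\alpha,\mathcal{K}_\beta)$ under the moves that relate any two bridge diagrams of the same link.

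First I would handle flattened braid diagrams. Under the biholomorphism $\mathscr{Y}_n \cong \mathscr{Y}_{n,\tau}$ from Manolescu's Proposition 2.7, I would verify that the product Lagrangian $\mathcal{K}_\alpha$ built from the trivial $\alpha$-arcs is identified with Seidel-Smith's reference vanishing-cycle Lagrangian $\mathcal{K}$; this is a local model computation in $S$, since each $\Sigma_{\alpha_i}$ is the vanishing thimble swept out as $z$ varies along $\alpha_i$ with $u,v\in\sqrt{-p(z)}\mathbb{R}$, and the symmetric product of these identifications lands inside $\mathscr{Y}_n$. Next, one traces how the $\beta$-arcs are obtained from the $\alpha$-arcs by the braid word $\beta_L\times id$: the parallel transport in $Conf^0_{2n}(\mathbb{C})$ along that braid carries $\mathcal{K}$ to $(\beta_L\times id)(\mathcal{K})$ and carries $\Sigma_{\alpha_i}$ to $\Sigma_{\beta_i}$, because both are described by the same explicit formula in $(u,v,z)$. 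This identifies the two Floer complexes up to the overall grading shift $n+w$, establishing the result for flattened braid diagrams and recovering the original Seidel-Smith definition.

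For a general bridge diagram I would then prove invariance by exhibiting a finite list of combinatorial moves connecting any two bridge diagrams of the same link (isotopies of the arcs rel endpoints, handleslides of one $\alpha$-arc across another and similarly for $\beta$-arcs, and stabilization/destabilization adding a trivial pair of endpoints with a short $\alpha$-$\beta$ bigon), and for each move constructing a canonical quasi-isomorphism between the Floer complexes. Isotopies of arcs give Hamiltonian isotopies of the associated Lagrangian spheres in $S$, hence induce continuation-map isomorphisms on $HF^*(\mathcal{K}_\alpha,\mathcal{K}_\beta)$; stabilization contributes a Lagrangian sphere meeting the corresponding dual sphere in two canceling points that give a contractible summand (or alternatively is handled by Manolescu's stabilization argument in the Hilbert scheme). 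Given any two bridge diagrams of $L$, one reduces both to a common flattened braid diagram by such a sequence, and the composite isomorphism is then canonical because each elementary move has a preferred continuation/handleslide map.

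The main obstacle is the handleslide invariance. Unlike isotopy, a handleslide is not realized by a Hamiltonian deformation of the Lagrangians $\Sigma_{\alpha_i}$ inside $S$, and in the Hilbert scheme $\mathcal{K}_\alpha$ changes by a surgery-type operation. To show the associated chain-level map is a quasi-isomorphism, one needs a pseudo-holomorphic triangle count inside $\mathscr{Y}_n=Hilb^n(S)\setminus D_r$, ruling out escape of curves into the relative divisor $D_r$ so that Manolescu's tautological correspondence between holomorphic polygons in $Hilb^n(S)$ and tuples of polygons in $S$ applies, and then identifying the model triangle on a single fiber. This is precisely Waldron's Theorem 4.12, whose proof handles these energy and transversality issues; I would cite it as the technical input, after assembling the move list and the flattened-braid comparison above.
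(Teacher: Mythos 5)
The paper offers no proof of this proposition: it is stated as a direct citation of Manolescu's Theorem 1.2 and Waldron's Theorems 1.1 and 4.12, and no proof environment follows. Your outline is therefore not being measured against an argument in the paper, but it is a faithful gloss of how those cited results are organized --- Manolescu's identification of the Hilbert-scheme Lagrangians with the Seidel-Smith thimbles in the flattened-braid case, then invariance under isotopy, handleslide and stabilization, with the technical heavy lifting deferred to Waldron --- so it is consistent with the paper's treatment. One caveat on the stabilization step: this is not a matter of a ``contractible summand inside a fixed Floer complex,'' since adding a trivial $\alpha$-$\beta$ pair changes the ambient space from $\mathscr{Y}_n$ to $\mathscr{Y}_{n+1}$; the comparison genuinely requires a degeneration argument relating holomorphic discs in the two different Hilbert schemes, and the paper itself flags this in its closing remarks as the most delicate of the three moves. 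Since you ultimately cite Waldron for the hard parts (his Theorem 4.12 for handleslides and his Lemma 5.19 for stabilization, as the paper also does in Section 2.2), this is a presentational imprecision rather than a substantive gap.
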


With Proposition~\ref{prop:rela} in mind, we finally define:  
\begin{definition}\label{def:symp} The symplectic Khovanov cohomology $Kh^*_{symp}(D)$ of a bridge diagram $D$ is defined to be $HF^*(\mathcal{K}_\alpha,\mathcal{K}_\beta)$ in $\mathscr{Y}_n=Hilb^n(S)\backslash D_r$. 
\end{definition}

\begin{remark} We have not yet given an absolute grading for this definition yet, and in fact, Proposition~\ref{prop:rela} is proved in the relatively graded case. Manolescu has an absolute grading in \cite{M04} but it only works with flattened braid diagrams, where explicit choices can be made on Lagrangians to construct an absolute Maslov grading. Waldron's construction in \cite{W09} works for any bridge diagram and his isomorphism is canonical but only works in the relatively graded case. 
\end{remark}

In the rest of the paper, whenever we mention $Kh^*_{symp}(L)$, we will always be working with Definition~\ref{def:symp}, unless noted otherwise.

\subsection{An absolute grading for bridge diagrams}\label{3.3}
In this subsection, we prove Theorem~\ref{thm:bridge} by showing that the homological grading shifted by $gr(x_0)+rot+w$ is invariant under isotopy, handleslide and stabilization. We reiterate our conventions here that are crucial in defining the absolute grading. Our link is oriented. At each intersection, the $\beta$-arc surpasses the $\alpha$-arc. The distinguished generator has all the coordinates at the starting points of every $\beta$-arc. 

The idea of this correction term is from Droz-Wagner in \cite{DW09}, where they worked with grid diagrams obtained from deforming flattened braid diagrams. A flattened braid diagram of a braid $b\in{Br_n}$ is a special bridge diagram associated to the braid $b$ whose marked points $\mu_i$ are placed on the real line with an order by their indices, $\alpha$-arcs are segments on the real line connecting $\mu_{2i-1}$ and $\mu_{2i}$, $\beta$-arcs are the images of $\alpha$-arcs after applying the braid $b\times id\in Br_{2n}$ action on the plane with $2n$ punctures at the marked points. 

\begin{remark}An interesting example is that if we take a bridge diagram that represents a braid closure, the rotation number is exactly the number of strands (because all Seifert circles are going counterclockwise,) and the writhe of the diagram matches the writhe of the braid. The distinguished generator has homological grading $0$ in any flattened braid diagram. Thus the correction term $gr(x_0)+rot+w$ for the braid closure diagram agrees with Manolescu's correction term $n+w$ of the flattened braid diagram, where these two diagrams are isotopic as link diagrams.
\end{remark}

Geometrically, such an absolute grading requires us to grade our Lagrangians by choosing some sections of the canonical bundle. In the braid closure setup, two Lagrangians are related by some fibered Dehn twists and thus the choice on the second Lagrangian can be induced from the first one canonically. In the general bridge diagram case (not a flattened braid diagram), there is no easy way to assign a choice to the second Lagrangian, so instead, we assign the generator $x_0$ to have the grading $0$. 

Manolescu pointed out a combinatorial method to compute the relative homological grading in \cite[Subsection 6.2]{M04}. In short, we replace each $\alpha$-arc $\alpha_i$ with an oriented (the orientation does not matter) figure eight $\gamma_i$ in a small neighborhood. We arrange our diagram such that each $\beta$-arc is horizontal and each figure eight is vertical wherever they intersect. Each intersection of Lagrangian spheres $\Sigma_{\alpha_i}$ and $\Sigma_{\beta_j}$ corresponds to an intersection of the arc $\gamma_i$ and the figure eight $\beta_j$. We travel along with the figure eight $\gamma_i$ and mark the points that have horizontal tangent lines. We assign $+1$ to those marked points if it is locally oriented counterclockwise and $-1$ if clockwise. We start moving along the oriented figure eight, at the starting point of the $\beta$-arc with $0$ (that it is essentially still relative but we are going to remove the grading of this generator in the correction term anyway) and change this number by the number on the marked point whenever we reach one. In this way, each intersection of a $\beta$-arc and the figure eight $\gamma_i$ will be labeled with a number, see Figure~\ref{fig:Isotopy1} for an idea of the computation. Each of the generators of the Floer cohomology group corresponds to $n$ intersection points and the sum of the labeled numbers will be its grading. 

\begin{proposition}$HF^{*+gr(x_0)+rot+w}(\mathcal{K}_\alpha,\mathcal{K}_\beta)$ is invariant under bridge diagram isotopy. 
\end{proposition}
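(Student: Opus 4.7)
Since Waldron's theorem already gives the isotopy invariance of the relatively graded Floer cohomology $HF^*(\mathcal{K}_\alpha,\mathcal{K}_\beta)$, the content of the proposition is that the combinatorial shift $gr(x_0)+rot+w$ changes by zero under every elementary bridge diagram isotopy. The plan is to decompose an arbitrary isotopy into elementary moves and verify invariance of the shift case by case.

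After a generic perturbation, a bridge diagram isotopy decomposes into three types of local moves: (i) planar isotopies of a single arc that preserve the crossing pattern of the underlying link diagram; (ii) Reidemeister II exchanges between an $\alpha$-arc and a $\beta$-arc, creating or destroying a pair of crossings; and (iii) Reidemeister III triangle moves. Because the arcs in a bridge diagram are required to be embedded and pairwise disjoint within each of $\vec{\alpha}$ and $\vec{\beta}$, a Reidemeister I move on an individual arc cannot occur. Consequently, the writhe $w$ and the rotation number $rot$ are automatically invariant under each elementary move: R2 introduces a pair of crossings of opposite signs and preserves the Seifert circles, R3 preserves both by inspection, and pure planar isotopy preserves both because neither crossings nor Seifert resolutions are altered. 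The verification therefore reduces to showing that $gr(x_0)$, computed through Manolescu's figure-eight procedure, is invariant under each move.

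For each elementary move I would work directly with the figure-eight construction. Under a Reidemeister II move, two new intersection points appear on some figure-eight $\gamma_i$, separated by a local bigon whose two new horizontal tangents carry opposite signs; the running cumulative sum therefore returns to its initial value after traversing the bigon, and in particular its value at the starting point of the $\beta$-arc is unchanged, so $\Delta gr(x_0)=0$. For a Reidemeister III move, the horizontal tangent structure outside the triangular region is unaffected, and a direct local check inside the triangle confirms that the cumulative sum at each of the three new intersection points is preserved. For a planar isotopy preserving the crossing pattern, the figure-eight is deformed but remains a closed immersed curve of the same turning number, so horizontal tangents are created or destroyed only in pairs of opposite sign, and their net contribution to the cumulative sum along the traversal cancels.

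The main obstacle is the fine sign bookkeeping for move (i), particularly when a planar deformation drags a horizontal tangent across a $\beta$-intersection point on $\gamma_i$, which individually shifts the label at that intersection. Such a shift must be matched by an equal and opposite shift at another intersection arising from the same deformation, and verifying this pairing requires a careful local analysis along the lines of the Droz-Wagner grid-diagram computation cited in the paper. Analogous care is needed for isotopies that move arcs near the starting point of a $\beta$-arc, where the cumulative sum is initialized and the identity of the reference intersection on $\gamma_i$ can shift. Once these local pairings are set up with consistent orientation conventions on the figure-eights, the induced Seifert circles, and the $\beta$-over-$\alpha$ crossing convention, the invariance of $gr(x_0)$ follows by additivity over the sequence of elementary moves, and combined with the invariance of $rot$ and $w$ this yields the proposition.
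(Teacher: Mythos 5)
Your decomposition of bridge isotopies into elementary moves has a critical error: you claim that because each $\vec\alpha$ and each $\vec\beta$ consists of embedded, pairwise-disjoint arcs, a Reidemeister~I move ``cannot occur.'' This rules out an arc crossing itself, but it does \emph{not} rule out the actual R1 configuration, which in a bridge diagram is created by an $\alpha$-arc and a $\beta$-arc that share an endpoint twisting around each other near that endpoint to produce a kink in the underlying link. This situation can and does occur under a bridge isotopy (it is precisely the first case in the paper's Figure~1), and it is the only case where the correction term receives a genuinely nontrivial analysis. Under such a move the writhe $w$ changes by $\pm 1$, the rotation number $rot$ changes because a small Seifert circle is created or destroyed, and in one orientation subcase the distinguished generator $x_0$ itself acquires a coordinate at the newly traversed endpoint, so $gr(x_0)$ shifts by $2$; verifying that these three contributions cancel is the entire point. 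Your proposal handles only moves that preserve $w$ and $rot$ individually (R2, R3, and planar isotopy), so it never confronts the case that makes the statement subtle, and the claimed invariance is not established.

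A secondary point: R3 is \emph{not} an elementary bridge diagram isotopy. An R3 move requires a triangle formed by three mutually crossing strands, but any such triangle would require two $\alpha$-arcs (or two $\beta$-arcs) to cross each other, which the pairwise-disjointness condition forbids. The paper notes that R3 arises only through combinations of handleslides and (de-)stabilizations, which are treated as separate moves. Including it here does no harm, but it signals a misreading of what the bridge moves are, and it distracts from the R1 case that actually needs work.
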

\begin{proof} An isotopy of a bridge diagram induces Hamiltonian isotopic Lagrangians and thus keeps the relatively graded group $HF^{*}(\mathcal{K}_\alpha,\mathcal{K}_\beta)$ unchanged. 

If no crossing is introduced or removed, it is easy to see that all three components in the correction term remain the same. If crossings are introduced, it must come from one of the following cases, as shown in Figure~\ref{fig:Isotopy}: 
\begin{figure}
\includegraphics[width=6cm]{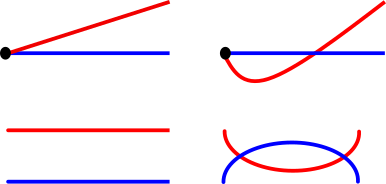}
\centering
\caption{Two possible bridge diagram isotopies corresponding to Reidemeister I and II moves. }
\label{fig:Isotopy}
\end{figure}

These two isotopies correspond to Reidemeister move I and II, (whereas Reidemeister move III is from a combination of (de-)stabilizations and handleslides.)

In the first case, there are two subcases--the $\beta$-arc oriented to the left and to the right. The easier subcase is the left-going $\beta$-curve. The crossing is a negative crossing and the bigon region gives an additional counterclockwise Seifert circle. The distinguished generator $x_0$ has no coordinate in this region and thus its grading does not change. The total change of $gr(x_0)+rot+w$ is $0$. 

\begin{figure}
\includegraphics[width=6cm]{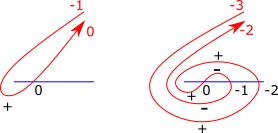}
\centering
\caption{Isotopy corresponding to Reidemeister I move with intersections marked with gradings and horizontal tangent points marked with signs.}
\label{fig:Isotopy1}
\end{figure}

The other subcase is more subtle. The crossing is still negative but the Seifert circle is now clockwise, which changes the correction term by $-2$. The distinguished generator $x_0$ indeed takes a coordinate at the black dot. With the help of figure eights in Figure~\ref{fig:Isotopy1}, we can visualize that the relative grading is changed by $2$, which cancels the contribution of the other two terms. In fact, we can explicitly describe the change of the cochain. The bigon region in the complex plane lifts to some holomorphic discs in the complex surface $S$, connecting one of the two generators at the interior intersection to the endpoint. One of the moduli spaces has indeed dimension $1$ that contributes to differentials cancelling pair of generators with identical coordinates except one coordinate in the picture. At chain level, the isotopy created three copies of the complexes that have a coordinate at the black dot, but two of them are canceled on the homology level. The surviving copy should have the same grading as the original chain. It is not hard to see the other moduli space of the bigon region has dimension $2$, and thus locally changing coordinate from the lower grading coordinate at the intersection to the black dot increases the grading by $2$. So in the new diagram, the grading of the distinguished generator increases by $2$ relatively to other generators, which cancels the contribution by the sum of writhe and rotation number. 

In the second case which corresponds to Reidemeister II move, regardless of the orientation, two new crossings always have different signs so the writhe remains unchanged. If the two components are oriented in the same direction, the Seifert surface remains unchanged and thus the rotation number is also the same. If the two components are oriented differently, there will be two subcases, whether the two strands are from the same component or not. In the first subcase, locally one Seifert circle breaks into three circles, but the middle one is of a different direction. In the second subcase, there are two Seifert circles of the same direction in each picture. In either case, the rotation number remains the same as well. Apart from the two new intersections in the new diagram, all other generators are from the diagram before the isotopy and it is not hard to see that the gradings of such generators remain unchanged. Thus $gr(x_0)$ remains unchanged as well relative to other generators. 
\begin{figure}
\includegraphics[width=6cm]{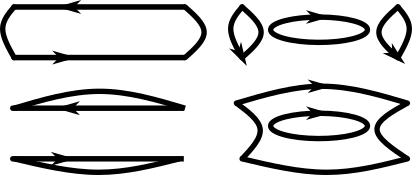}
\centering
\caption{Two cases for Reidemeister II isotopy when two components are oriented differently.}
\label{fig:Isotopy2}
\end{figure}
\end{proof}

\begin{proposition}$HF^{*+gr(x_0)+rot+w}(\mathcal{K}_\alpha,\mathcal{K}_\beta)$ is invariant under stabilization. 
\end{proposition}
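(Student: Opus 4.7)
The plan is to pin down exactly what the stabilization move does locally to the bridge diagram and then check, subcase by subcase, that the induced shift in the relative homological grading of the Floer complex is precisely cancelled by the combined change in $gr(x_0)$, $rot$, and $w$. Modeled on Markov stabilization of the underlying braid (which the excerpt signals is the bridge-diagram analog here, since Reidemeister III is recovered from stabilization plus handleslides), the move inserts a new pair of marked points $p_{2n+1}$, $p_{2n+2}$, a new $\alpha$-arc $\alpha_{n+1}$, and a new $\beta$-arc $\beta_{n+1}$ in a small disk near the boundary of the diagram, with $\beta_{n+1}$ crossing one existing $\alpha$-arc once (the sign choice yielding a positive or negative stabilization). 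The underlying link $L$ is preserved, so the absolute invariant ought to be unchanged.

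First I would make the local picture explicit: list the new intersection points in the stabilization region, describe the bijection between old generators and new ones (each old generator extends by a unique forced coordinate in the local region), and locate the new distinguished generator $x_0'$. Because $\beta_{n+1}$ is freshly introduced, $x_0'$ gains exactly one new coordinate at one of the two new endpoints, determined by the orientation of $\beta_{n+1}$; its other coordinates match those of $x_0$ in the unchanged part of the diagram.

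Second, I would compute each combinatorial quantity separately: $\Delta w = \pm 1$ from the single new crossing; $\Delta rot = \pm 1$ from the small new Seifert circle produced by the stabilization strand; and $\Delta gr(x_0)$, evaluated with the figure-eight bookkeeping recalled before the isotopy proposition, by replacing $\alpha_{n+1}$ with a small oriented figure eight and marking horizontal tangencies. In parallel, I would determine the chain-level shift in relative homological grading that the identification of old and new Floer complexes incurs, by counting the holomorphic discs localized in the stabilization disk — analogous to the bigon count that appeared in the Reidemeister I subcase of the previous proposition.

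Third, I would verify in each orientation and sign subcase that $\Delta gr(x_0) + \Delta rot + \Delta w$ exactly cancels the Floer grading shift, so that the shifted cohomology $HF^{*+gr(x_0)+rot+w}(\mathcal{K}_\alpha, \mathcal{K}_\beta)$ is unaffected by stabilization. The main obstacle I expect is the local holomorphic disc count identifying the chain complexes with their precise grading shift; the $\beta_{n+1}$-strand sweeps across an existing $\alpha$-arc, so the relevant moduli spaces are richer than in the Reidemeister I bigon case and care is needed to match the discs in the stabilization region against the figure-eight contribution to $\Delta gr(x_0)$. Once that local count is in hand, the verification of cancellation reduces to the same combinatorial bookkeeping used for isotopy invariance.
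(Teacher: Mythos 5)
Your local model of the stabilization move is wrong, and this breaks your proposed cancellation. You model stabilization on Markov stabilization of a braid, with the new $\beta$-arc crossing an existing $\alpha$-arc once in the interior, yielding $\Delta w = \pm 1$ and $\Delta rot = \pm 1$. But the stabilization move in question is bridge-diagram stabilization as used by Manolescu and Waldron: the new pair of marked points together with the small new $\alpha$-arc and $\beta$-arc are introduced without creating any new crossing of the underlying link diagram. The new arcs form a tiny bigon with the existing diagram (the new intersections occur only at the new marked points), so $\Delta w = 0$ and $\Delta rot = 0$. (Markov stabilization of a braid closure is a composite of bridge-diagram moves, not the primitive stabilization move itself.) Since neither $w$ nor $rot$ changes, the verification reduces entirely to checking that the relative homological grading of the distinguished generator $x_0$ is preserved.

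For that step your plan to carry out a local holomorphic-disc count is considerably more work than needed and risks getting bogged down. The paper instead cites Waldron's Lemma 5.19: his stabilization isomorphism of Floer cohomology groups is realized at the chain level, so the chain-level generator $x_0$ in the old complex maps to $x_0'$ in the new one without any relative grading shift. This makes $\Delta gr(x_0) = 0$ as a relative grading immediately, and the sum $\Delta gr(x_0) + \Delta rot + \Delta w = 0$ with no computation. If you want to salvage your disc-counting approach, the first thing to fix is the local picture --- with the correct model the moduli spaces are the simple ones attached to a bigon at a marked point, which is essentially what Waldron's chain-level argument already encodes.
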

\begin{proof} Stabilizations will not change the writhe or the rotation number of the diagram. It suffices to show that the grading of the homology remains relatively the same to the distinguished generator $x_0$. It is true because Waldron's proof of invariance is based on chain level that the isomorphism is in fact on chain level so $gr(x_0)$ is not changed relative to cohomology, see \cite[Lemma 5.19]{W09}.
\end{proof}

The handleslide invariance is the hardest among the three. Before the proof, we make some topological observations. With the isotopy invariance, we should arrange our diagram to be as simple as possible. 

We explicitly describe the process of the handleslide of arc $\alpha_2$ over $\alpha_1$. First, we choose the boundary circle of slightly flattened $\alpha_1$, such that each midpoint intersection of any $\beta$ arc locally intersect the circle exactly twice, and the $\beta$ arcs connecting the endpoints of $\alpha_1$ locally intersect the circle exactly once. Then pick any path $\gamma$ from $\alpha_2$ to the circle that does not intersect any $\alpha$ arc in its interior, and perform a connect sum such that each intersection between the path and the $\beta$ arc creates exactly two intersections on the connected sum. 

\begin{proposition}$HF^{*+gr(x_0)+rot+w}(\mathcal{K}_\alpha,\mathcal{K}_\beta)$ is invariant under handleslide. 
\end{proposition}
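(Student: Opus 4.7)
The plan is to reduce the proof to a local cancellation computation, in the same spirit as the Reidemeister~I analysis carried out in Figure~\ref{fig:Isotopy1}. Waldron's Theorem~\cite[Theorem 4.12]{W09} already supplies a quasi-isomorphism of Floer cochain complexes under handleslide that preserves the \emph{relative} grading; thus I only need to verify that the shift $gr(x_0)+rot+w$ changes by exactly the opposite amount as the relative grading of a reference generator supported away from the handleslide region. Concretely, fixing a generator $x$ with no coordinate near $\alpha_1$ or the path $\gamma$, so that its relative grading is unchanged, I must show
\begin{equation*}
\bigl[gr(x_0^{\mathrm{new}})+rot^{\mathrm{new}}+w^{\mathrm{new}}\bigr] - \bigl[gr(x_0^{\mathrm{old}})+rot^{\mathrm{old}}+w^{\mathrm{old}}\bigr] = 0.
\end{equation*}

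First I would enumerate the new crossings produced by the handleslide of $\alpha_2$ over $\alpha_1$. They split into three local families: (i) pairs created where the connecting path $\gamma$ crosses a $\beta$-arc; (ii) pairs created where the boundary circle of the flattened neighborhood of $\alpha_1$ crosses a $\beta$-arc at an interior intersection of $\beta$ with $\alpha_1$; and (iii) single crossings created where the boundary circle meets a $\beta$-arc at an endpoint of $\alpha_1$. In families (i) and (ii), the $\beta$-arc enters and exits the altered region in opposite transverse directions, so the two new crossings carry opposite signs and contribute $0$ to $w$. Only family (iii) can shift $w$.

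Next I would compute the rotation number change. Each opposite-sign pair in families (i) and (ii) produces a small bigonal Seifert region whose orientation is dictated by the local picture; paired-up contributions cancel, and the residual change in $rot$ is again localized to the endpoint region of family (iii). Then I would compute the change in $gr(x_0)$ using the figure-eight prescription of \cite[Subsection~6.2]{M04}: since the $\beta$-arcs and hence the starting points of the $\beta$-arcs are unchanged by handleslide, the coordinates of $x_0$ move only in so far as the figure eight $\gamma_2$ is replaced by the figure eight of the new arc $\alpha_2'=\alpha_2\#\partial N(\alpha_1)$. The difference of the two figure-eight computations is a local count of horizontal-tangent points along the added boundary circle and the path $\gamma$, with signs determined by local orientations.

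Finally, I would assemble the three contributions and verify cancellation endpoint by endpoint of $\alpha_1$. By isotopy invariance, I may simplify the diagram so that $\gamma$ is a short straight segment and the boundary circle of $\alpha_1$ is round, reducing the check to a finite list of local models indexed by the orientations of the two $\beta$-arcs at the endpoints of $\alpha_1$ and by the orientation of $\gamma$. In each local model the three changes are exactly the ones computed for a Reidemeister~I-like creation of a bigon, and the cancellation mirrors the one verified in Figure~\ref{fig:Isotopy1}. The main obstacle will be the family~(iii) endpoint analysis, because it is the only place where $w$, $rot$, and $gr(x_0)$ all shift simultaneously and the signs depend on several orientation conventions at once; the rest of the argument is essentially bookkeeping built on top of Waldron's chain-level invariance.
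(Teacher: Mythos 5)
Your high-level structure is close to the paper's: both reduce handleslide invariance to a local analysis of the new crossings created along the connecting path $\gamma$ and along the boundary circle $\partial N(\alpha_1)$, split by orientation type, and appeal to Waldron's chain-level invariance so that only the correction term needs checking. However, you diverge in a way that I think creates a real gap: you expect the three quantities $w$, $rot$, and $gr(x_0)$ each to shift under handleslide, with the cancellation happening ``endpoint by endpoint of $\alpha_1$.'' The paper instead proves that each of the three terms is \emph{separately} unchanged. For the writhe, the mechanism is not endpoint-local: the two singleton crossings in your family (iii) (one near each endpoint of $\alpha_1$) have \emph{opposite} signs, because $\alpha_1$ has exactly one inflow endpoint and one outflow endpoint in the orientation of the link, while $\alpha_2'$ traverses $\partial N(\alpha_1)$ in a single consistent direction so that its tangent near $p_1$ is opposite to its tangent near $p_2$. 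These two contributions cancel against \emph{each other}, not against contributions to $rot$ or $gr(x_0)$ at the same endpoint. An endpoint-by-endpoint verification of $\Delta w + \Delta rot + \Delta gr(x_0) = 0$ would therefore fail unless one also finds that $\Delta rot$ and $\Delta gr(x_0)$ individually contribute $\mp 1$ at each endpoint, which is not what the paper finds and is not established in your sketch.

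A second point where you diverge: for $gr(x_0)$, you propose to directly compare the two figure-eight computations (old $\gamma_2$ vs.\ new figure eight around $\alpha_2'$) and count the new horizontal-tangent points along $\gamma$ and $\partial N(\alpha_1)$. The paper instead argues that the continuation map under a handleslide identifies generators, and that the figure-eight grading is unchanged because the passage from $\alpha_2$ to $\alpha_2'$ is ``essentially an isotopy'' if one allows the arc to slide over marked points and other $\alpha$-arcs; hence $gr(x_0)$ is unchanged relative to all other generators. Your plan is actually more explicit and might be made rigorous, but as written it is not carried out. Finally, the paper's $rot$ analysis is genuinely global along both the path $\gamma$ and the circle (tracking sign changes $+\to-$, $-\to+$ in the sequence of $\beta$-arcs crossed, and matching the last $\beta$ of the $\gamma$-part against the first $\beta$ of the circle-part), whereas your sketch asserts that the nontrivial $rot$ contribution ``is again localized to the endpoint region''; this understates the bookkeeping the paper does. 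In short: the plan is reasonable in spirit, but the cancellation does not have the endpoint-local form you predict, and the crucial computations are deferred rather than done.
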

\begin{proof} It is clear that new intersections are created in pairs, and each pair contains exactly one negative and one positive crossing. Thus the writhe of the diagram remains unchanged. 

Next, we show that the rotation number remains unchanged as well. This breaks into two parts. 

First, we study the intersections created around the path $\gamma$. Locally, there are a series of parallel $\beta$-arcs intersecting two $\alpha$-arcs. The two $\alpha$-arcs are parallel to the path $\gamma$ and always oriented in different directions. 

Let us call a $\beta$-arc with positive sign if it goes from left to right, and negative otherwise. We study the local picture with only two adjacent $\beta$-arcs. There are four cases in total shown in Figure~\ref{fig:HS1} In the case $++$, the first Seifert circle remains the same. In the case $--$, the second circle remains the same. The other Seifert circle is connected to another section. 
\begin{figure}
\includegraphics[width=6cm]{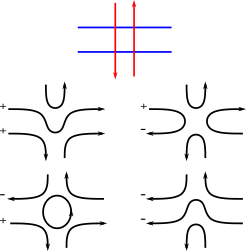}
\centering
\caption{All four possible Seifert circles at the intersections near the path $\gamma$, depending on the orientation of $\beta$-arcs.}
\label{fig:HS1}
\end{figure}

In the case $+-$, there are two subcases, whether the two $\beta$-arcs belong to the same strand or different strands, similar to the Reidemeister II move argument. It will either break a clockwise circle into two, or merging two counterclockwise circles into one. In either case, the rotation number is reduced by $1$. 

In the case $-+$, one counterclockwise circle is created. The other two Seifert circles are studied already in other cases. Thus the rotation number is increased by $1$. 

To sum this up, each time the sign change from $+$ to $-$, the rotation number decreases by $1$, and each time the sign change from $-$ to $+$, the rotation number increases by $1$. At the intersection of the path $\gamma$ and $\alpha_1$, $\alpha_1$ is oriented to the right (to match the orientation of the $\alpha$-arcs in the handleslide picture,) and the circle will remain the same if the first $\beta$ is positive, and decreases the rotation number by $1$ if the first $\beta$ is negative. Thus, if the last $\beta$-arc is positive, the rotation number remains the same, if the last one is negative, the rotation number is reduced by $1$. The last $\beta$-arc is also related to the next part of the argument. 

The other part we need to compute is the contribution of new crossings created at the circle near $\alpha_2$. Without loss of generality, let us assume $\alpha_2$ is oriented from left to right. The part in the dashed box of Figure~\ref{fig:HS2} is exactly the same as the picture before the handleslide except for the last endpoint intersection, (and, in fact, the last intersection correspond to the intersection on the top-right if we keep going from the $\alpha$-arc.) The rest of the region is two parallel $\alpha$-arcs oriented differently, intersecting a series of parallel $\beta$-arcs. The study of these new intersections is similar to the first half of the argument and the conclusion is similar, except now the sign depends on the first $\beta$-arc. We claim that the rotation number remains the same if the first crossing is positive, and increases by $1$ if the first crossing is negative. 

\begin{figure}
\includegraphics[width=8cm]{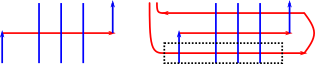}
\centering
\caption{All four possible Seifert circles at the intersections near the path $\gamma$, depending on the orientation of $\beta$-arcs.}
\label{fig:HS2}
\end{figure}

Now we only need to check the last $\beta$ of the first part and the first $\beta$ of the last part. If both are of the same sign, the Seifert circle does not change. If the $\beta$ from the first half if $+$ and the $\beta$ from the second half is $-$, then it created a counterclockwise Seifert circle, rotation number increases by $1$. In the symmetric case, the rotation number decreases by $1$. In any of the cases above, the total change of rotation number if $0$. 

Lastly, we need to check the relative grading of the homology remains the same relative to the distinguished generator $x_0$. First of all, the isomorphism of two homology groups is induced by a continuation map on the cochain level. It is also easy to see that there is an injection from the generators of the original cochain to the ones of the cochain after the handleslide. It is easy to see that the homological gradings of the corresponding generators are not changed from the definition of figure-eights, because we only changed one of the arcs and we essentially only applied an isotopy to one of the $\alpha$ arcs if we allow the isotopy to move across other marked points and $\alpha$ arcs. Moreover, the distinguished generator is obviously one of the generators that are preserved in the correspondence, and thus the homological grading relative to the distinguished generator $x_0$ remains the same. 


\end{proof}

Combining the propositions in this subsection, we complete a proof for Theorem~\ref{thm:bridge} that we obtain an absolutely graded version of symplectic Khovanov cohomology for any bridge diagram. 

\section{Weight grading on $Kh^*_{symp}(L)$}\label{ch3}

\subsection{Construction of weight grading}\label{3.4}
In this subsection, we define the weight grading $wt$ on $Kh^*_{symp}$, following the idea of Abouzaid-Smith, see \cite[Section 3]{AS16} for more details. The idea of constructing such a grading is building an automorphism, more precisely, the linear term of a \textit{non-commutative vector field}, or a \textit{nc vector field} \begin{equation}\phi:CF^*(\mathcal{K}_\alpha,\mathcal{K}_\beta)\to CF^*(\mathcal{K}_\alpha,\mathcal{K}_\beta)\end{equation} which preserves the homological grading and commutes with the differential. Then it will induce an automorphism $\Phi$ on cohomology. If $x$ is an eigenvector of eigenvalue $\lambda$, then we define $wt(x)=\lambda$. Most results of this section work for any characteristic field but we will restrict our discussion to characteristic zero ones, see Remark~\ref{re:field} for more detail. 

The idea of defining the map $\phi$ is to study certain moduli spaces of holomorphic maps in a partially compactified space $\bar{\mathscr{Y}}_n$ of $\mathscr{Y}_n$:
\begin{itemize} 
\item Let $Z=\bar{\bar{A}}_{2n-1}$ be the blow up of $\mathbb{P}^1\times\mathbb{P}^1$ at $2n$ points. It admits a Lefschetz fibration structure over $\mathbb{P}^1$ and let $F_\infty$ be its fiber at $\infty$ and $s_0$, $s_\infty$ be two sections. The original surface is simply \begin{equation}A_{2n-1}=\bar{\bar{A}}_{2n-1}\backslash(F_\infty\cup s_0\cup s_\infty)\end{equation}
\item Let the intermediate space be \begin{equation}\bar{A}_{2n-1}=\bar{\bar{A}}_{2n-1}\backslash(F_\infty)\end{equation}.
\item Through taking Hilbert scheme, the projective variety is $\bar{\bar{M}}=Hilb^n(Z)$.
\item $D_0$ is the divisor of subschemes whose support meets $s_0\cup s_\infty$.
\item $D_\infty$ is the divisor of subschemes whose support meets $F_\infty$. 
\item $D_r$ is the relative Hilbert scheme of the projection $Z\to \mathbb{P}^1$, which is a divisor supported on a compactification of the complement of $\mathscr{Y}_n$. 
\item $\bar{\mathscr{Y}}_n=\bar{\bar{M}}\backslash D_\infty=Hilb^n(\bar{A}_{2n-1})$. 
\end{itemize}

The moduli space was originally used by Seidel and Solomon in \cite{SS12} for the q-intersection number and later used by Abouzaid and Smith in \cite{AS16} for the weight grading. Let $\mathscr{R}^{k+1}_{(0,1)}$ be the moduli space of holomorphic classes of closed unit discs with the following additional data:
\begin{itemize} 
\item two marked points $z_0=0$ and $z_1\in (0,1)$. 
\item $k+1$ boundary punctures at $p_0=1$ and $k$ others $p_1,\ldots p_k$ placed counterclockwise. 
\end{itemize}

We define \begin{equation}\mathscr{R}^{k+1}_{(0,1)}(x_0;x_k,\ldots,x_1)\end{equation} to be the moduli space of finite energy holomorphic maps $u:\mathbb{D}\to \bar{\mathscr{Y}}_n$ such that 
\begin{itemize}
\item $u^{-1}(D_r)=\emptyset$
\item $u^{-1}(D_0)=z_0$
\item $u^{-1}(D'_0)=z_1$, where $D'_0$ is a divisor linearly equivalent to $D_0$ but shares no irreducible component with $D_0$. 
\item $u(p_i)=x_i$
\end{itemize}

Now we are ready to introduce our automorphism $\Phi$. In the case of $k=1$, the virtual dimension of $\mathscr{R}^{2}_{(0,1)}(x,y)$ is the difference of the homological grading $gr(x)-gr(y)$, see \cite[Lemma 3.16]{AS16}. It makes sense to consider a map of degree $0$ that counts holomorphic discs in $\mathscr{R}^{2}_{(0,1)}(x,y)$: 
\begin{equation} b^1(x)=\sum_{y|gr(y)=gr(x)}\#\mathscr{R}^{2}_{(0,1)}(x,y)y
\end{equation}

\begin{remark} We stated the definition above without orientation. But in fact, the moduli spaces $\mathscr{R}^{k+1}_{(0,1)}(x_0;x_k,\ldots,x_1)$ admit natural orientations relative to the orientation lines $o_{x_i}$ and the moduli spaces of curves $\mathscr{R}^{k+1}_{(0,1)}$, by fixing orientation for $\mathscr{R}^{k+1}_{(0,1)}$ by identifying the interior points of this moduli space with an open subset of $(0,1)\times (\partial\Delta)^{k+1}$ and twist the overall orientation by $\sum_{i=1}^k i gr(x_i)$. 
\end{remark}

$\tilde{b}^1$ is the linear part of a Hochschild cochain $\tilde{b}\in CC^*(\mathcal{F}(M),\mathcal{F}(M))$ if we allow multiple inputs instead of one single input $y$. But this cochain is not closed. Two additional string maps 
\begin{equation}CO(gw_1)\text{, where $gw_1$ bounds the Gromov-Witten invariant $GW_1$, }\end{equation}  
\begin{equation} co(\beta_0)\text{, where $\beta_0$ bounds the intersection of $D_0$ and $D_0'$,}\end{equation} are needed to make a closed cochain and moreover a $nc$ vector field. By adding these two terms, we obtain a Hochschild cocycle $b$ and its linear part $b^1$. 




By definition, given an (exact) Lagrangian $L$, the obstruction to the existence of an equivariant structure $c$ is given by the first term of the $nc$ vector field $b^0|_L\in HF^1(L,L)\cong H^1(L)$, and the set of choices when this vanishes is an affine space equivalent to $H^0(L)$. In our case, the Lagrangians are a product of $S^2$, thus $H^1(L)\cong\{0\}$ and $H^0(L)\cong \textbf{k}$. 

Since we do not assume that $b^0$ vanishes, for a cocycle $b\in CC^1(\mathcal{F}(M),\mathcal{F}(M))$ and equivariant objects $(\mathcal{K}_\alpha,c_\alpha)$ and $(\mathcal{K}_\beta,c_\beta)$, the linear term $b^1$ is not always a chain map. However, we can define a chain map \begin{equation} \phi(x)=b^1(x)-\mu^2(c_\alpha,x)+\mu^2(x,c_\beta) \end{equation}. It induces an endomorphism $\Phi$ on $HF^*(\mathcal{K}_\alpha,\mathcal{K}_\beta)$. If we consider the (generalized) eigenspace decomposition, the eigenvalue of the generalized eigenvector $x$ will be its weight grading denoted as $wt(x)$. 

\begin{remark}\label{re:field} The construction above works in any field $\mathbf{k}$. The weight grading is a priori indexed by elements of the algebraic closure $\bar{\mathbf{k}}$. Working with characteristic zero field will not only enable us to use the result of Abouzaid-Smith that identifies symplectic Khovanov cohomology and Khovanov homology, but also will make it possible to make the weight grading integral, in contrast to finite fields. If the weight grading lives in the algebraic closure of a finite field, it is impossible to make it integral using conventional methods. 
\end{remark}

We learn from \cite[Lemma 2.12]{AS16} that since $HF^0(\mathcal{K},\mathcal{K})\cong\textbf{k}$, changing equivariant structures shifts the overall weight by a constant \begin{equation} \Phi_{(\mathcal{K}_\alpha,c_\alpha),(\mathcal{K}_\beta,c_\beta)}= \Phi_{(\mathcal{K}_\alpha,c_\alpha+s_\alpha),(\mathcal{K}_\beta,c_\beta+s_\beta)}+(s_\alpha-s_\beta)id \end{equation}

So we have the following definition of the relative weight grading:
\begin{definition} Let $\Phi$ be the endomorphism constructed above on $HF^*(\mathcal{K}_\alpha,\mathcal{K}_\beta)$ and $x$ be an eigenvector of $\Phi$. The relative weight grading $wt(x)$ is defined to be the eigenvalue of $x$. This construction relies on auxiliary choices of equivariant structures on $\mathcal{K}_\alpha$ and $\mathcal{K}_\beta$, but different choices of such structures will only change all gradings by a fixed number and thus $wt(x)$ as a relative grading is independent of choices of equivariant structures.
\end{definition}

\begin{remark} With given equivariant structures on $\mathcal{K}_\alpha$ and $\mathcal{K}_\beta$, we can compute an absolute weight grading. But at the time of writing this paper, the author does not know the choices that would give a correction term independent of the link diagram. 
\end{remark}

Then we can rephrase Abouzaid-Smith's purity result as follow: 
\begin{proposition}\cite[Proposition 6.11]{AS16}\label{prop:nocrossing} Let $D$ be a bridge diagram without any crossings. Then we can choose $c_\alpha$ on $\mathcal{K}_\alpha$ and $c_\beta$ on $\mathcal{K}_\beta$ such that for any $x\in HF^*(\mathcal{K}_\alpha,\mathcal{K}_\beta)$, we have $wt(x)=gr(x)$. \end{proposition}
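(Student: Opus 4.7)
The plan is to reduce the computation to a tensor-product structure that emerges in the crossingless case and then to a base case on the Milnor fibre of $A_1$. Since $D$ has no crossings, the $\alpha$- and $\beta$-arcs pair into $n$ embedded planar circles with matching endpoints, so $L$ is an $n$-component unlink. After isotoping each $\beta_{\sigma(i)}$ to a small perturbation of its partner $\alpha_i$ and perturbing $\Sigma_{\beta_{\sigma(i)}}$ in the surface $S$ as in Section~\ref{3.2}, each pair $(\Sigma_{\alpha_i},\Sigma_{\beta_{\sigma(i)}})$ meets transversely in two points sitting in homological degrees $0$ and $2$ of the cohomology of an $S^2$. Because the different pairs occupy disjoint planar regions and the subspace $\mathscr{Y}_n \subset Hilb^n(S)$ restricts to a local product near $\mathcal{K}_\alpha \cap \mathcal{K}_\beta$, a K\"unneth-type argument identifies $HF^*(\mathcal{K}_\alpha,\mathcal{K}_\beta) \cong \bigotimes_{i=1}^n HF^*(\Sigma_{\alpha_i},\Sigma_{\beta_{\sigma(i)}}) \cong H^*(S^2)^{\otimes n}$ as graded vector spaces, with the absolute homological grading coming from Theorem~\ref{thm:bridge}.

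Next I would show that the $nc$ vector field decomposes compatibly with this splitting. Holomorphic discs contributing to $b^1$ and to the string-map corrections $CO(gw_1)$ and $co(\beta_0)$ can be chosen inside a product neighborhood of $\mathcal{K}_\alpha \cap \mathcal{K}_\beta$, and together with the constraint $u^{-1}(D_r)=\emptyset$ this forces each relevant disc to factor through a single tensor slot. Hence $\phi$ acts by a Leibniz rule $\phi(x_1\otimes\cdots\otimes x_n) = \sum_i x_1\otimes\cdots\otimes\phi_i(x_i)\otimes\cdots\otimes x_n$, reducing the claim to the case $n=1$. In the base case, $\mathscr{Y}_1 \subset Hilb^1(S) = S$ is the Milnor fibre of $A_1$ and $\Sigma_\alpha,\Sigma_\beta$ are two isotopic Lagrangian spheres; the Lefschetz fibration structure on $\bar{A}_1$ lets one enumerate the sections meeting $D_0$ and $D_0'$ and verify that the unshifted $\phi_1$ has eigenvalues on the two generators whose difference is exactly $2$. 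Finally, choosing $c_\alpha,c_\beta\in H^0(\Sigma_\alpha) \cong H^0(\Sigma_\beta) \cong \textbf{k}$ to translate the spectrum of each $\phi_i$ by a constant, I can force the degree-$0$ generator to have weight $0$ and the degree-$2$ generator to have weight $2$.

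Combining the two ingredients, $wt(x_1\otimes\cdots\otimes x_n) = \sum_i gr(x_i) = gr(x)$ for every tensor-product generator, which proves the proposition. The main obstacle is the $n=1$ eigenvalue computation: one must identify $\mathscr{R}^2_{(0,1)}(x,y)$ inside $\bar{\mathscr{Y}}_1$ and track both the disc count and the corrections $CO(gw_1)$, $co(\beta_0)$ precisely enough to read off that the eigenvalue difference equals the homological grading difference. This is the heart of the Abouzaid-Smith argument in \cite[Proposition 6.11]{AS16}, and it essentially uses the geometry of the partial compactification $\bar{\mathscr{Y}}_n$ via the divisors $D_0$, $D_\infty$, and $D_r$.
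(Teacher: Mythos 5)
The paper does not prove this proposition: it is quoted as an external result of Abouzaid and Smith (\cite[Proposition 6.11]{AS16}, a rephrasing of their purity theorem \cite[Theorem 1.1]{AS16}) and used as a black-box input, so there is no internal argument to compare yours against. With that caveat, your outline is a plausible skeleton but has genuine gaps beyond the one you flag yourself.

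First, the tensor-factor reduction presumes that a crossingless bridge diagram is a disjoint union of $n$ bigons, i.e.\ that each $\alpha_i$ pairs with a single $\beta_{\sigma(i)}$; in general an unknot component of a crossingless bridge diagram may be cut into several $\alpha$- and $\beta$-arcs, and reducing to bigons requires handleslides or (de)stabilizations. Invariance of the weight grading under exactly those moves is what the paper explicitly says it cannot yet prove internally (see the discussion following Theorem~\ref{thm:relative}), so the reduction cannot be taken for free. Second, the claim that $\phi$ --- including the string-map corrections $CO(gw_1)$ and $co(\beta_0)$ --- satisfies a Leibniz rule across the K\"unneth factors is asserted but not argued: the divisors $D_0$, $D_0'$ and the Gromov--Witten bounding term are global structures on $\bar{\mathscr{Y}}_n$, and it is not automatic that the interior-constrained discs localize to a single tensor slot. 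Finally, you explicitly defer the $n=1$ eigenvalue computation, calling it the heart of the Abouzaid--Smith argument, so even the base case is not established. These deferred steps are substantial --- indeed they are the content of the cited result --- which is consistent with the paper's choice to cite \cite{AS16} rather than reprove it.
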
 

If we recall the Khovanov homology of an unlink of $k$-component $U_k$ is
\begin{equation} Kh^{*,*}(U_k)=\bigotimes^k(\textbf{k}_{(0,1)}\oplus\textbf{k}_{(0,-1)}).
\end{equation}
With the choice of the equivariant structures by Abouzaid and Smith, we have
\begin{equation} Kh^{*,*}_{symp}(U_k)=\bigotimes^k(\textbf{k}_{(1,1)}\oplus\textbf{k}_{(-1,-1)})
\end{equation}
This proposition is a special case of our main theorem with crossing number equals to 0, if we relate gradings $(gr,wt)$ on symplectic Khovanov cohomology and $(i,j)$ on Khovanov homology with the formula:
\begin{equation} i=gr-wt \end{equation}
\begin{equation} j=-wt \end{equation}

\subsection{Floer product and weight grading}\label{2.4}
In \cite[Section 3]{AS16}, Abouzaid and Smith pointed out an important fact that the weight grading is compatible with Floer products, without actually phrasing and proving the precise statement. Also in \cite[Equation 4.9]{SS12}, Seidel and Solomon discussed the derivation property in a similar setup. We prove the following proposition: 

\begin{proposition}\label{prop:product} Let $\mathcal{K}_0$, $\mathcal{K}_1$, $\mathcal{K}_2$ be compact Lagrangians given by crossingless matchings in $\mathscr{Y}_n$. For any eigenvector $\alpha\in HF^*(\mathcal{K}_1,\mathcal{K}_2)$ and $\beta\in HF^*(\mathcal{K}_0,\mathcal{K}_1)$, we have \begin{equation}wt(\mu^2(\alpha,\beta))=wt(\alpha)+wt(\beta)\end{equation} 
\end{proposition}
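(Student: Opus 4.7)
The plan is to prove the stronger statement that the weight endomorphism $\Phi$ on $HF^*$ is a graded derivation of the triangle product $\mu^2$; the additivity on eigenvectors is then immediate, since if $\Phi(\alpha)=wt(\alpha)\alpha$ and $\Phi(\beta)=wt(\beta)\beta$ the derivation property gives $\Phi(\mu^2(\alpha,\beta))=(wt(\alpha)+wt(\beta))\mu^2(\alpha,\beta)$.

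The starting point is that $b=b^0+b^1+b^2+\cdots\in CC^1(\mathcal{F}(M),\mathcal{F}(M))$ is by construction a Hochschild cocycle, $[\mu,b]=0$. Unpacking this identity and isolating the component with two external inputs $\alpha\in CF^*(\mathcal{K}_1,\mathcal{K}_2)$ and $\beta\in CF^*(\mathcal{K}_0,\mathcal{K}_1)$ yields a chain-level relation of the schematic form
\begin{equation*}
b^1\mu^2(\alpha,\beta)-\mu^2(b^1\alpha,\beta)-(-1)^{|\beta|}\mu^2(\alpha,b^1\beta)\equiv\mu^3(b^0,\alpha,\beta)+\mu^3(\alpha,b^0,\beta)+\mu^3(\alpha,\beta,b^0)\pmod{\operatorname{im}\mu^1},
\end{equation*}
the $b^2$-terms being absorbed into the $\mu^1$-coboundary.

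Next I would use the defining property of the equivariant structure $c_i$ on $\mathcal{K}_i$, namely $\mu^1(c_i)=b^0|_{\mathcal{K}_i}$, to trade each $\mu^3$-insertion of $b^0|_{\mathcal{K}_i}$ for a $\mu^2(c_i,\cdot)$ or $\mu^2(\cdot,c_i)$ correction, up to a Floer coboundary; this is exactly what the $A_\infty$-associativity relation coupling $\mu^3$, $\mu^2$, and $\mu^1$ provides. After substitution the identity above reassembles to
\begin{equation*}
\phi_{02}(\mu^2(\alpha,\beta))\equiv\mu^2(\phi_{12}(\alpha),\beta)+(-1)^{|\beta|}\mu^2(\alpha,\phi_{01}(\beta))\pmod{\operatorname{im}\mu^1},
\end{equation*}
where $\phi_{ij}(x):=b^1(x)-\mu^2(c_i,x)+\mu^2(x,c_j)$ is the chain-level weight map for the pair $(\mathcal{K}_i,\mathcal{K}_j)$. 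Descending to $HF^*$ gives the advertised derivation property of $\Phi$, and hence the proposition.

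The main obstacle will be the sign and combinatorics bookkeeping in this second step: one must check that the Koszul signs arising from the Hochschild cocycle condition for $b$ match exactly the signs produced by the $A_\infty$-identities relating $c_i$, $\mu^3$, and $\mu^2$, so that nothing extraneous survives on cohomology. A secondary subtlety is that $b$ contains the closed-string corrections $CO(gw_1)$ and $co(\beta_0)$; one needs to verify that their contributions to the two-input component of the cocycle identity either vanish or enter the $\mu^1$-coboundary, since they come from closed-string classes that couple to the Fukaya category only through open-closed maps. Both points are of the form anticipated by \cite[Section 3]{AS16} and modeled on the derivation identity \cite[Equation 4.9]{SS12}.
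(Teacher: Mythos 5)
Your proposal arrives at the same conclusion — the derivation property $\Phi(\mu^2(\alpha,\beta))=\mu^2(\Phi\alpha,\beta)+\mu^2(\alpha,\Phi\beta)$ on cohomology — but by a genuinely different route. The paper works directly with the moduli space $\bar{\mathscr{R}}^3_{(0,1)}(x_0;x_1,x_2)$ (three boundary punctures, two interior marked points) and enumerates the six boundary degenerations of its one-dimensional strata: three involving $\phi$ composed with $\mu^2$ in the various positions, and three involving $\mu^1$, which die on cohomology. You instead take the Hochschild cocycle condition $[\mu,b]=0$ as the starting black box, isolate the two-input component, and then trade the $\mu^3$-insertions of $b^0|_{\mathcal{K}_i}$ for $\mu^2(c_i,\cdot)$-corrections via the $A_\infty$-associativity relation and the defining property $\mu^1(c_i)=b^0|_{\mathcal{K}_i}$ of an equivariant structure.

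The two routes are essentially dual pictures of the same Gromov compactness, since the cocycle condition is itself proved by the same kind of degeneration analysis. Where your version has an advantage is in transparency about the role of the equivariant corrections: the paper's statement that the first three degenerations compute $\phi(\mu^2(x_1,x_2))$, etc., silently folds in the $-\mu^2(c_\alpha,\cdot)+\mu^2(\cdot,c_\beta)$ terms, whereas you make explicit that the raw moduli count produces $b^1$ (plus $b^0$-insertions) and that the equivariant structures are precisely what repackages this into the $\phi_{ij}$. Where the paper's approach is cleaner is that it stays entirely at the level of concrete moduli spaces for a single triple of Lagrangians, sidestepping the full Hochschild formalism. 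Your flagged concerns — sign bookkeeping, and ensuring the $CO(gw_1)$ and $co(\beta_0)$ terms contribute nothing visible after passing to cohomology — are real but handled in \cite[Section 3]{AS16}; the paper sidesteps both by the remark that sphere bubbles can be excluded by a correct choice of bounding cycle. Either argument succeeds.
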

\begin{proof} Consider the boundary strata of the moduli space $\bar{\mathscr{R}}^3_{(0,1)}(x_0;x_1,x_2)$, where we have three boundary marked points and two interior marked points. We can exclude sphere bubbles through a correct choice of bounding cycle. There are still 6 kinds of degeneration (see Figure~\ref{fig:product}) of this moduli space. Three degenerations shown in the first row compute \begin{equation}\phi(\mu^2(x_1,x_2))\end{equation}\begin{equation}\mu^2(x_1,\phi(x_2))\end{equation}\begin{equation}\mu^2(\phi(x_1),x_2)\end{equation} The three degenerations shown in the second row compute \begin{equation}\mu^1\phi^2(x_1,x_2)\end{equation}\begin{equation}\phi^2(\mu^1(x_1),x_2)\end{equation}\begin{equation}\phi^2(x_1,\mu^1(x_2))\end{equation} If we pass to homology, those terms with $\mu^1$ will vanish and thus we have the following relation by counting all the boundary components of $1$ dimensional moduli space $\bar{\mathscr{R}}^2_{(0,1)}(x_0;x_1,x_2)$
\begin{equation}\phi(\mu^2(x_1,x_2))=\mu^2(x_1,\phi(x_2))+\mu^2(\phi(x_1),x_2)\end{equation} This is equivalent to \begin{equation}wt(\mu^2(x_1,x_2))\mu^2(x_1,x_2)=\mu^2(x_1,wt(x_2)x_2)+\mu^2(wt(x_1)x_1,x_2)=(wt(x_1)+wt(x_2))\mu^2(x_1,x_2).\end{equation}, which proves the result. 
\end{proof}
\begin{figure}
\includegraphics[width=6cm]{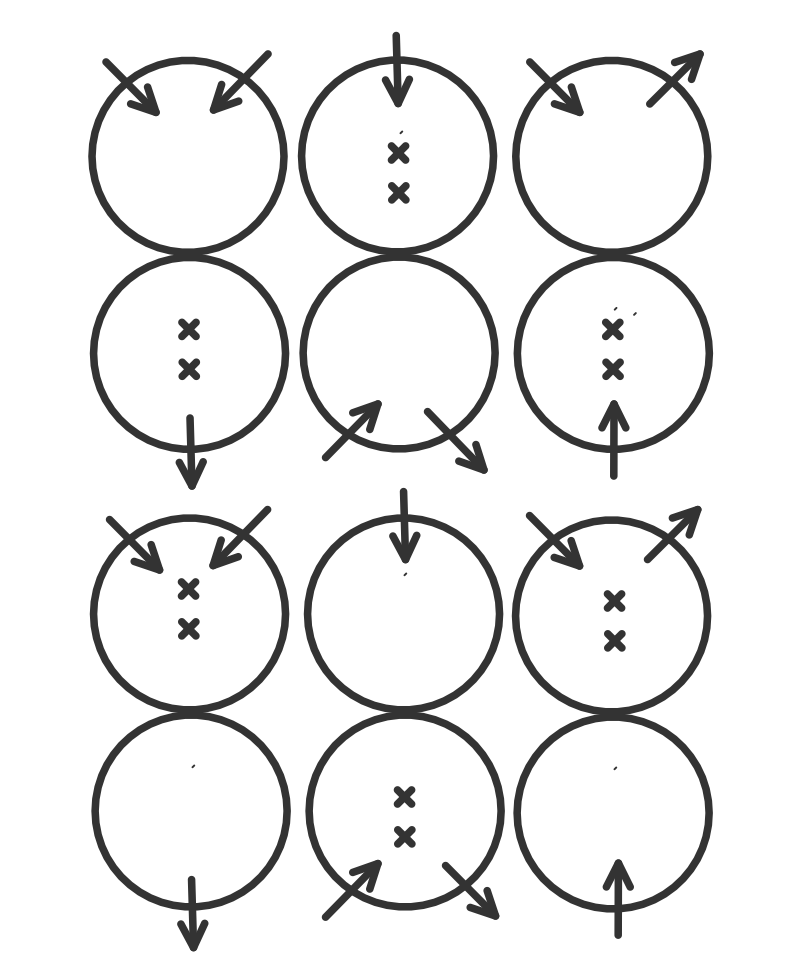}
\centering
\caption[6 possible breaks in the boundary]{6 possible breaks in the boundary. The incoming arrows are inputs and the outgoing ones are outputs}
\label{fig:product}
\end{figure}

\begin{remark}\label{rmk:product} By studying the similar set up with more boundary marked points, we can generalize the result above to higher Floer products. Specifically, when $n=3$, we have $wt(\mu^3(x_1,x_2,x_3))=wt(x_1)+wt(x_2)+wt(x_3)$. 
\end{remark}

\subsection{A long exact sequence of $Kh^{*,*}_{symp}(L)$}\label{ch4}
Abouzaid and Smith constructed a long exact sequence from an exact triangle of bimodules over the Fukaya category of $\mathscr{Y}_n$, see \cite[Equation 7.9]{AS19}:
\begin{equation}\ldots\to Kh_{symp}^{*}(L_+)\to Kh_{symp}^{*}(L_0)\to Kh_{symp}^{*+2}(L_\infty)\to \ldots
\end{equation}
where $L_+$ is a link diagram with a positive crossing and $L_0$ and $L_\infty$ are diagrams given by $0$ or $\infty$ resolutions at the positive crossing. The goal of this chapter is to give an explicit construction of such a long exact sequence with the framework of bridge diagrams that preserves the weight grading, just like the combinatorial Khovanov homology. 


We use the following local diagrams for the computation: we name the blue curves $\beta$, green curves $\gamma$ and yellow curves $\delta$, respectively, in Figure~\ref{fig:exact}, and they share all the other components (and to make it easier to discuss in the future, we move $\gamma$ and $\delta$ slightly away from $\beta$ in all the other components so that they don't intersect in the interior of the arcs). Pairing $\alpha$ with $\beta$ gives $L_+$, $\alpha$ with $\gamma$ gives $L_0$ and $\alpha$ with $\delta$ gives $L_-$. We have the following exact sequence:
\begin{proposition}\cite[Proposition 7.4]{AS19} If we have $\alpha$, $\beta$, $\gamma$ and $\delta$ curves presented like Figure~\ref{fig:exact} locally and $\beta$, $\gamma$ and $\delta$ are the same apart from this region, then we have the following exact sequence
\begin{equation}\label{eqt:exact2}\ldots\xrightarrow{c_1}  HF^{*}(\mathcal{K}_\alpha,\mathcal{K}_\beta)\xrightarrow{c_2} HF^{*}(\mathcal{K}_\alpha,\mathcal{K}_\gamma)\xrightarrow{c_3} HF^{*+2}(\mathcal{K}_\alpha,\mathcal{K}_\delta)\xrightarrow{c_1} \ldots
\end{equation}

\begin{figure} 
\includegraphics[width=6cm]{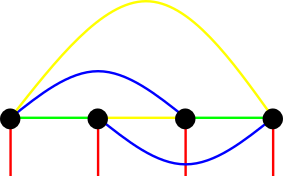}
\centering
\caption[Exact triangle of Lagrangians]{Exact triangle of Lagrangians. The red, blue, green and yellow are $\alpha$, $\beta$, $\gamma$ and $\delta$ curves}
\label{fig:exact}
\end{figure}

In particular, there are elements $c_1 \in CF^{*}(\mathcal{K}_\beta,\mathcal{K}_\delta)$, $c_2 \in CF^{*}(\mathcal{K}_\gamma,\mathcal{K}_\beta)$ and $c_3 \in CF^{*}(\mathcal{K}_\delta,\mathcal{K}_\gamma)$ such that the maps above are Floer products with the corresponding elements. 
\end{proposition}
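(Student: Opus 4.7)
The plan is to realize $\mathcal{K}_\beta$, $\mathcal{K}_\gamma$, $\mathcal{K}_\delta$ as the vertices of an exact triangle in the Fukaya category of $\mathscr{Y}_n$ and then extract the stated long exact sequence by applying the representable functor $HF^*(\mathcal{K}_\alpha,-)$.

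First I would identify the distinguished cocycles. Since $\beta$, $\gamma$, $\delta$ agree outside the local region, the three product Lagrangians differ in only one factor, and the local spheres $\Sigma_\beta$, $\Sigma_\gamma$, $\Sigma_\delta$ in $S$ meet each other in a controlled way (after the small perturbation of Section~\ref{3.2}). For each consecutive ordered pair I would select the intersection point corresponding to the ``corner'' of the local triangle configuration and extend by the diagonal generators in the remaining $n-1$ factors; this produces elements $c_1 \in CF^*(\mathcal{K}_\beta,\mathcal{K}_\delta)$, $c_2 \in CF^*(\mathcal{K}_\gamma,\mathcal{K}_\beta)$, $c_3 \in CF^*(\mathcal{K}_\delta,\mathcal{K}_\gamma)$. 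A short local calculation, using that any holomorphic disc contributing to $\mu^1(c_i)$ must be supported near the local region, shows that each $c_i$ is a cocycle.

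Second, I would build the triangle. Locally the three arcs form the standard vanishing-cycle configuration in an $A_2$-Milnor fiber, so $\Sigma_\beta$ is Hamiltonian isotopic to the Dehn twist image $\tau_{\Sigma_\delta}(\Sigma_\gamma)$. This lifts to a fibered Dehn twist on $Hilb^n(S)$ acting trivially on the remaining factors, producing a symplectomorphism that carries $\mathcal{K}_\gamma$ to $\mathcal{K}_\beta$ up to Hamiltonian isotopy. Seidel's long exact sequence for spherical Dehn twists then yields an exact triangle with vertices $\mathcal{K}_\beta$, $\mathcal{K}_\gamma$, $\mathcal{K}_\delta$ in the Fukaya category, with an appropriate grading shift. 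Applying $HF^*(\mathcal{K}_\alpha,-)$ to this triangle gives the stated long exact sequence, and by the construction of Seidel's triangle the connecting maps are precisely the Floer multiplications by the $c_i$.

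The main obstacle is the second step. While the arc-level Dehn twist relation in the surface is transparent, lifting it to the Hilbert scheme demands control over the fibered Dehn twist near the divisor $D_r$ and a verification that no holomorphic disc counted by Seidel's triangle bubbles off onto $D_r$. The degree-two shift is another delicate point: tracking it requires a Maslov-index computation for the two endpoint intersections that survive perturbation, which must be reconciled with the absolute grading convention from Section~\ref{3.3}. An alternative that avoids Seidel's triangle entirely would be to work directly with the twisted complex built from $c_2$, show by a cohomology computation that its cone is quasi-isomorphic to the shift of $\mathcal{K}_\delta$, and then invoke Yoneda; this reduces the argument to computing Floer cohomologies against the generating set of product Lagrangians from crossingless matchings, which may be more diagrammatically tractable in the framework of this paper.
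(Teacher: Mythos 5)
Your approach is genuinely different from the paper's, and it has a gap at the crucial step. The paper derives the long exact sequence formally from Abouzaid--Smith's exact triangle of \emph{bimodules} over the Fukaya category of $\mathscr{Y}_n$ (identity bimodule $\to$ cup--cap bimodule $\to$ half-twist bimodule): one evaluates this triangle of bimodules at $\mathcal{K}_\gamma$ as the second object to get an exact triangle of one-sided Yoneda modules for $\mathcal{K}_\gamma$, $\mathcal{K}_\delta$, $\mathcal{K}_\beta$ with connecting maps given by Floer multiplication, and then evaluates those modules at $\mathcal{K}_\alpha$. The heavy lifting (the bimodule triangle itself) is a black box from \cite{AS19}.

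Your second step tries to replace that black box by Seidel's long exact sequence for a spherical Dehn twist, and this is where the argument breaks. The symplectomorphism of $Hilb^n(S)$ induced by the local Dehn twist along $\Sigma_\delta \subset S$ is a \emph{fibered} Dehn twist, not a spherical twist around the Lagrangian $\mathcal{K}_\delta$. The object $\mathcal{K}_\delta$ is a product $\Sigma_{\delta_1}\times\cdots\times\Sigma_{\delta_n}$, so $HF^*(\mathcal{K}_\delta,\mathcal{K}_\delta)\cong H^*((S^2)^n)$ has rank $2^n$; $\mathcal{K}_\delta$ is not a spherical object and Seidel's LES for $\tau_{\mathcal{K}_\delta}$ does not apply. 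Even if one pretends it did, Seidel's triangle for a spherical $V$ reads $HF(V,L)\otimes V \to L \to \tau_V L$, and since the paper's Corollary~\ref{cor:unlink1} gives $HF^*(\mathcal{K}_\delta,\mathcal{K}_\gamma)\cong\bigotimes^{n-1}H^*(S^2)$ (rank $2^{n-1}$, not $1$), the third vertex would be $2^{n-1}$ copies of $\mathcal{K}_\delta$ rather than a single copy. Constructing the correct exact triangle for the fibered Dehn twist---which is precisely what collapses that redundant tensor factor to a single copy of the cup--cap module---is exactly the non-trivial content of \cite[Proposition~7.4]{AS19}, so the step you hoped to carry out by elementary means is in fact the result you would need to cite. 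Your alternative suggestion at the end (build the twisted complex from $c_2$ and verify by a cohomology computation that its cone represents $\mathcal{K}_\delta$, then invoke Yoneda) is closer in spirit to what Abouzaid--Smith actually prove, but again that computation \emph{is} the bimodule triangle, not a shortcut around it. Your first step (constructing the $c_i$ as cocycles from corner intersections extended by diagonal generators) is reasonable and in line with how such elements are usually identified, and it is compatible with the paper's conclusion that the connecting maps are Floer products with specific cochain-level elements.
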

\begin{proof} Abouzaid and Smith proved that there is an exact triangle of bimodules of the Fukaya category of $\mathscr{Y}_n$ among the identity bimodule, the cup-cap bimodule and the bimodule representing an half-twist $\tau$. Evaluating these bimodules at $\mathcal{K}_\gamma$ as the second object, we have an exact triangle of one-sided modules between $\mathcal{K}_\gamma$, $\mathcal{K}_\delta$ and an one-sided module that is equivalent to $HF^*(\bullet,\mathcal{K}_\beta)$, such that the maps connecting those terms are Floer products with some elements $c_1 \in CF^{*}(\mathcal{K}_\beta,\mathcal{K}_\delta)$, $c_2 \in CF^{*}(\mathcal{K}_\gamma,\mathcal{K}_\beta)$ and $c_3 \in CF^{*}(\mathcal{K}_\delta,\mathcal{K}_\gamma)$. Evaluating these one-sided modules at $\mathcal{K}_\alpha$, we have the desired long exact sequence. 
\end{proof}

Now we need to show that this long exact sequence preserves the relative weight grading in the sense that each element summing to $c_i$ has the same weight grading so that $c_i$ has a well-defined weight grading, and moreover the weight gradings of $c_1$, $c_2$ and $c_3$ sum to $0$. With a closer look into the diagram, we have the following observation:

\begin{lemma} Each pair of $\beta$, $\gamma$ and $\delta$ forms a bridge diagram for an unlink of $(n-1)$-components without any crossing. 
\end{lemma}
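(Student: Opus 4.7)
The plan is to treat the pair $(\beta,\gamma)$; the other two pairs are handled identically by the symmetry of the construction. By the setup of Figure~\ref{fig:exact}, $\beta$ and $\gamma$ agree on $n-2$ of their arcs outside the local region (made pairwise disjoint by the small perturbation prescribed in the paper), while inside the local region they use the same four endpoints but realize two distinct perfect matchings of these four points, corresponding respectively to the positive crossing and its $0$-resolution.

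First, I would verify that $(\beta,\gamma)$ gives a bridge diagram with no interior intersections. Outside the local region, the prescribed perturbation makes each pair of parallel shared arcs into a thin bigon whose two sides are disjoint except at their common endpoints. Inside the local region, the two distinct perfect matchings of the four boundary points of a disk can be realized by mutually disjoint embedded arcs in the disk, which is visible directly in Figure~\ref{fig:exact} (the two matchings live in complementary sectors of the local disk). Hence there are no interior crossings anywhere in the combined diagram.

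Next I would count components and check unlinkedness. The $n-2$ shared arcs outside the local region pair up into $n-2$ trivial bigon loops, each contributing a single component. Inside the local region, alternately following the two distinct matchings on four points traces a single $4$-cycle visiting every endpoint exactly once, contributing one further loop. This gives $(n-2)+1 = n-1$ components in total. Since the bridge diagram has no interior crossings, its underlying link in $\mathbb{R}^3$ admits a planar projection with no over/under data to record, so each component is unknotted and the components are pairwise unlinked. Therefore $(\beta,\gamma)$ represents an unlink of $n-1$ components. The only geometric subtlety is the planarity of the two matchings inside the local disk; once that is read off from the figure, the count and the unlink conclusion are immediate.
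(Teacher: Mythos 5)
Your proposal is correct and follows essentially the same approach as the paper's proof: read off from Figure~\ref{fig:exact} that the pair of arcs inside the local region closes up into a single crossingless unknot, observe that the $n-2$ shared arcs outside pair into $n-2$ bigon unknots, and conclude an $(n-1)$-component crossingless unlink. The only addition you make is spelling out the combinatorial reason that two distinct perfect matchings of four endpoints join into a single $4$-cycle, where the paper simply asserts the local unknot by inspection of the figure; this is a modest elaboration rather than a different argument.
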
 
\begin{proof} In the region shown in Figure~\ref{fig:exact}, each two of $\beta$, $\gamma$ and $\delta$ form a crossingless unknot. In the other regions not shown in the figure, each pair of arcs forms a crossingless unknot component as well. Thus we have $1$ crossingless unknot component in Figure~\ref{fig:exact} and crossingless $(n-2)$ unknot components outside that figure. Together, each two of $\beta$, $\gamma$ and $\delta$ form a bridge diagram for an unlink of $(n-1)$ components without any crossing. 
\end{proof}

\begin{remark} Each of the Floer groups is computed with the second Lagrangian perturbed so that it intersects transversely with the first one. In this subsection, $\mathcal{K}_\beta$, $\mathcal{K}_\gamma$, $\mathcal{K}_\delta$ are all perturbed so they intersect transversely with $\mathcal{K}_\alpha$. We also know that $\beta$, $\gamma$, and $\delta$ only intersect at endpoints, so their corresponding Lagrangians have pairwise transverse intersections. We can now assume the perturbations we apply to $\mathcal{K}_\beta$, $\mathcal{K}_\gamma$, and $\mathcal{K}_\delta$ are small enough that they still pairwise intersect transversely, while they still intersect transversely with $\mathcal{K}_\alpha$. We will keep abusing the notation of Lagrangians $\mathcal{K}_\beta$, $\mathcal{K}_\gamma$, $\mathcal{K}_\delta$ and the Lagrangians after perturbations when pairing with $\mathcal{K}_\alpha$. 
\end{remark}

From the computations of \cite{AS19}, we have
\begin{corollary}\label{cor:unlink1} With some grading shifts 
\begin{equation}CF^*(\mathcal{K}_\beta, \mathcal{K}_\gamma)\cong CF^*(\mathcal{K}_\gamma, \mathcal{K}_\delta)\cong CF^*(\mathcal{K}_\delta, \mathcal{K}_\beta)\cong \bigotimes^{n-1} H^*(S^2)\end{equation} and thus all generators are cocycles. 
\end{corollary}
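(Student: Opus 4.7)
The plan is to reduce the corollary to a rank comparison between the Floer cochain group and the cohomology of an unlink. By the preceding lemma, each of the three pairings $(\beta,\gamma)$, $(\gamma,\delta)$, $(\delta,\beta)$ determines a bridge diagram for an unlink of $n-1$ components with no crossings, so by Definition~\ref{def:symp} the Floer cochain complex $CF^*(\mathcal{K}_\beta,\mathcal{K}_\gamma)$, and analogously the other two, computes $Kh^*_{symp}(U_{n-1})$.

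Next I would count generators. After applying the small perturbations described in the preceding remark so that $\mathcal{K}_\beta$, $\mathcal{K}_\gamma$, $\mathcal{K}_\delta$ pairwise intersect transversely, each of the $n-1$ crossingless unknot components of $U_{n-1}$ contributes exactly two transverse intersection points, one near each shared endpoint of the two perturbed arcs. The total number of generators of $CF^*(\mathcal{K}_\beta,\mathcal{K}_\gamma)$, and of the other two complexes, is therefore $2^{n-1}$.

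Then I would appeal to the known computation of the cohomology. By Proposition~\ref{prop:nocrossing}, or equivalently by combining Conjecture~\ref{conj:SS} (proved over characteristic zero fields by Abouzaid--Smith) with the standard Khovanov computation $Kh^{*,*}(U_{n-1})=\bigotimes^{n-1}(\textbf{k}_{(0,1)}\oplus\textbf{k}_{(0,-1)})$, one has $Kh^*_{symp}(U_{n-1})\cong\bigotimes^{n-1}H^*(S^2)$, a module of rank $2^{n-1}$. Since this rank already matches the number of generators of the cochain complex, the Floer differential vanishes identically, so every generator is a cocycle and $CF^*$ coincides with its cohomology $\bigotimes^{n-1}H^*(S^2)$.

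The grading shifts in the statement reflect the fact that the three bridge diagrams for $U_{n-1}$ may have different distinguished generators $x_0$, writhes, and rotation numbers, producing different overall homological shifts via Theorem~\ref{thm:bridge}. The only real point to verify is that the simultaneous perturbations of $\mathcal{K}_\beta$, $\mathcal{K}_\gamma$, $\mathcal{K}_\delta$ required to obtain pairwise transverse intersections do not obstruct the application of Definition~\ref{def:symp} to any of the three bridge diagrams; once this is granted, the rank comparison is the entire proof.
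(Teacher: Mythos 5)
Your rank-comparison argument is correct and fills in a step that the paper handles by simply citing the computations of Abouzaid--Smith, so in that sense you are supplying an explicit argument where the paper defers. The logic is sound: the preceding lemma identifies each pairing as a crossingless bridge diagram for $U_{n-1}$; the transversality remark guarantees that $\mathcal{K}_\beta$, $\mathcal{K}_\gamma$, $\mathcal{K}_\delta$ pairwise intersect in isolated points; and comparing the number of generators of $CF^*$ with the known rank $2^{n-1}$ of $Kh^*_{symp}(U_{n-1})$ forces the differential to vanish.

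Two small remarks. First, your phrase ``two transverse intersection points, one near each shared endpoint of the two perturbed arcs'' describes only the $n-2$ unknot components outside the local region, each made of a single arc from each collection. The component shown in Figure~\ref{fig:exact} is built from two arcs of each collection meeting at four shared endpoints; there the count of two generators comes not from two intersection points but from the two admissible matchings of arcs, each matching selecting one endpoint per arc pair. The total $2^{n-1}$ is nonetheless correct. Second, your global rank comparison is valid, but a more direct route --- and the one more likely behind the cited AS computations --- is local: for a crossingless matching, the figure-eight Maslov index computation shows all generators lie in degrees of the same parity, so the degree-one differential vanishes for parity reasons alone, without invoking the Abouzaid--Smith isomorphism to Khovanov homology. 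Your argument does rely on that deeper input, which is harmless here (the paper already uses it throughout) but worth noting as a heavier hammer than strictly necessary.
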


\begin{proposition}\label{prop:es} Fix a choice of equivariant structures on Lagrangians, there is a well-defined weight grading for $c_1$, $c_2$ and $c_3$.  
\end{proposition}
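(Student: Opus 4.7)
The plan is to combine two ingredients: each $c_i$ is a cocycle of definite homological degree, and the weight grading on each of the Floer cohomologies $HF^*(\mathcal{K}_\beta,\mathcal{K}_\gamma)$, $HF^*(\mathcal{K}_\gamma,\mathcal{K}_\delta)$, $HF^*(\mathcal{K}_\delta,\mathcal{K}_\beta)$ is constant on each homological degree.

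For the first ingredient, I would read off from the exact triangle \eqref{eqt:exact2} that the three connecting maps have fixed Maslov degrees $0$, $2$, and $-1$ (summing to $1$ as required for an exact triangle), and since each of these maps is given by Floer multiplication with the element $c_2$, $c_3$, or $c_1$ respectively, each $c_i$ is a homogeneous cocycle with respect to the homological grading in its Floer complex.

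For the second ingredient, I would use the lemma just preceding the proposition together with Corollary \ref{cor:unlink1}: each pair among $\beta$, $\gamma$, $\delta$ forms a crossingless bridge diagram of an $(n-1)$-component unlink, so the corresponding pairwise Floer complex is isomorphic to $\bigotimes^{n-1}H^*(S^2)$ with vanishing differential. Abouzaid--Smith's purity result (Proposition \ref{prop:nocrossing}) then supplies, for each pair separately, a choice of equivariant structures under which the weight grading equals the homological grading. Because changing the equivariant structure on either Lagrangian shifts the weight grading by a global additive constant, for any fixed triple of equivariant structures on $\mathcal{K}_\beta$, $\mathcal{K}_\gamma$, $\mathcal{K}_\delta$ the weight grading on each pair equals the homological grading plus a pair-dependent constant, and in particular every element of a given homological degree carries the same weight.

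Combining these, $c_i$ is concentrated in a single homological grading, and within that grading every element of the relevant pairwise Floer cohomology has the same weight, so $wt(c_i)$ is well-defined. The one conceptual point worth being careful about is that we do not need to realize purity on all three pairs simultaneously via a single triple of equivariant structures: well-definedness of $wt(c_i)$ at any fixed choice of structures follows directly from the pair-by-pair weight-is-constant-on-Maslov-degree property, itself a consequence of pair-by-pair purity plus overall shift invariance.
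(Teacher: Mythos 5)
Your proposal is correct and follows essentially the same argument as the paper's proof, combining Corollary~\ref{cor:unlink1}, Abouzaid--Smith purity (Proposition~\ref{prop:nocrossing}), and shift-invariance of the weight grading under a change of equivariant structures. Your remarks on the homogeneity of the $c_i$ in Maslov degree and on not needing simultaneous purity across the three pairs simply make explicit some points the paper leaves implicit.
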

\begin{proof}From Corollary~\ref{cor:unlink1}, we know each of the Floer cochains and cohomologies above can be made such that weight grading equals homological grading. Together with the observation that choices of equivariant structures will only apply overall grading shifts to all weight gradings, thus we know the weight grading of any element in $CF^n$ must be the same. If we fix a set of equivariant structures on $\mathcal{K}_\beta$, $\mathcal{K}_\gamma$ and $\mathcal{K}_\delta$, we have a well-defined weight grading for each $c_i$ from its homological grading plus the effect a grading shift from changing the equivariant structure from the standard one. 
\end{proof}

Before we prove that $wt(c_1)+wt(c_2)+wt(c_3)=0$. Recall the following lemma from Seidel:

\begin{lemma}\cite[Lemma 3.7]{Sei08} A triple $\mathcal{K}_\beta$, $\mathcal{K}_\gamma$ and $\mathcal{K}_\delta$ form an exact triangle of Lagrangians if and only if there exist $c_1 \in CF^{1}(\mathcal{K}_\beta,\mathcal{K}_\delta)$, $c_2 \in CF^{0}(\mathcal{K}_\gamma,\mathcal{K}_\beta)$, $c_3 \in CF^{0}(\mathcal{K}_\delta,\mathcal{K}_\gamma)$, $h_1\in HF^0(\mathcal{K}_\gamma,\mathcal{K}_\beta)$, $h_2\in HF^0(\mathcal{K}_\beta,\mathcal{K}_\delta)$ and $k\in HF^{-1}(\mathcal{K}_\beta,\mathcal{K}_\beta)$ such that
\begin{equation}\label{eqt:6.6} \mu^1(h_1)=\mu^2(c_3,c_2)
\end{equation}
\begin{equation}\label{eqt:6.7} \mu^1(h_2)=-\mu^2(c_1,c_3)
\end{equation}
\begin{equation} \mu^1(k)=-\mu^2(c_1,h_1)+\mu^2(h_2,c_2)+\mu^3(c_1,c_3,c_2)-e_{\mathcal{K}_\beta}
\end{equation}
\end{lemma}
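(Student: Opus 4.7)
The plan is to reformulate this as a statement about mapping cones in the Fukaya category and extract the three equations from the $A_\infty$-relations satisfied by a suitable twisted complex. Recall that, in an $A_\infty$-category, a triple $(X, Y, Z)$ forms an exact triangle if and only if one of the three objects is quasi-isomorphic to the cone of a morphism between the other two. In our setting, I would aim to show that the triple $(\mathcal{K}_\beta, \mathcal{K}_\gamma, \mathcal{K}_\delta)$ forms an exact triangle if and only if $\mathcal{K}_\delta$ is quasi-isomorphic to the twisted complex $\mathrm{Cone}(c_2) = (\mathcal{K}_\gamma[1] \oplus \mathcal{K}_\beta, \delta_{c_2})$, where $\delta_{c_2}$ has $c_2$ as its only nontrivial off-diagonal entry. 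Once that dictionary is set up, the three identities should drop out from $A_\infty$-relations on morphisms of twisted complexes.

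For the ``if'' direction, suppose the three equations are given. I would construct a morphism of twisted complexes $F:\mathcal{K}_\delta \to \mathrm{Cone}(c_2)$ whose two components (with respect to the direct sum decomposition of the cone) are built respectively from $c_3$ (mapping into the $\mathcal{K}_\gamma[1]$ summand) and from $h_2$ (mapping into the $\mathcal{K}_\beta$ summand), and similarly a morphism $G:\mathrm{Cone}(c_2)\to \mathcal{K}_\delta$ from $c_1$ and $h_1$. Equations \eqref{eqt:6.6} and \eqref{eqt:6.7} are precisely the conditions needed for $F$ and $G$ to be closed under the natural differential on morphism spaces of twisted complexes (that is, to be genuine $A_\infty$-morphisms). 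The third equation, involving $k$ and $\mu^3(c_1, c_3, c_2)$, is exactly the obstruction for the composition $GF$ (or $FG$) to equal the identity of the relevant object up to $\mu^1$-boundary; the term $e_{\mathcal{K}_\beta}$ records the strict identity, and its cancellation shows $F$ and $G$ are mutually quasi-inverse.

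For the ``only if'' direction, suppose $(\mathcal{K}_\beta, \mathcal{K}_\gamma, \mathcal{K}_\delta)$ form an exact triangle. Then by definition $\mathcal{K}_\delta$ admits a quasi-isomorphism with $\mathrm{Cone}(c_2)$ for some closed $c_2 \in CF^0(\mathcal{K}_\gamma, \mathcal{K}_\beta)$. Unwinding the components of this quasi-isomorphism and of its inverse gives closed elements that can be identified with $c_1$ and $c_3$, together with homotopies that serve as $h_1, h_2, k$. The stated identities then follow from the three $A_\infty$-constraints one gets by demanding that these components define $A_\infty$-morphisms that are inverse up to homotopy: the quadratic constraints yield the first two equations, while the cubic constraint, which necessarily involves $\mu^3$ because of the non-strict compatibility in the $A_\infty$-world, yields the third.

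The principal obstacle is the careful bookkeeping of signs, shifts, and degrees in the Koszul-signed $A_\infty$-formalism, especially in the third equation where the cubic product $\mu^3(c_1, c_3, c_2)$ encodes the higher-order compatibility beyond the naive bimodule picture. This is a routine (if notationally heavy) exercise that Seidel carries out in \cite{Sei08}, so in practice I would only verify the translation between the data $(c_1, c_2, c_3, h_1, h_2, k)$ and the twisted complex quasi-isomorphism and then invoke the reference, rather than redo the sign combinatorics from scratch.
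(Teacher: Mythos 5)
The paper does not prove this lemma: it is quoted verbatim as \cite[Lemma 3.7]{Sei08} and used as a black box, so there is no internal argument to compare your sketch against. Your reformulation via twisted complexes --- identifying an exact triangle with a quasi-isomorphism between the third object and the mapping cone of one of the $c_i$, reading the first two equations as closedness of the comparison morphisms and the third (with the $\mu^3$ term and $e_{\mathcal{K}_\beta}$) as the homotopy witnessing that they are quasi-inverses --- is indeed the structure of Seidel's own proof, so the high-level route is right.

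One caveat worth flagging: your concrete assignment of components does not line up with the three stated equations. You propose $F = (c_3, h_2):\mathcal{K}_\delta\to \mathrm{Cone}(c_2)$, but closedness of that $F$ under the twisted differential would pair $h_2$ against $c_2$, producing a relation of the shape $\mu^1(h_2) \sim \mu^2(c_2,c_3)$, whereas the lemma's equation~\eqref{eqt:6.7} involves $\mu^2(c_1,c_3)$; the degrees also do not quite match ($c_3\in CF^0(\mathcal{K}_\delta,\mathcal{K}_\gamma)$ would need to be a component of a degree-nonzero morphism into $\mathcal{K}_\gamma[1]$). So the dictionary between $(c_1,c_2,c_3,h_1,h_2,k)$ and the cone data needs to be re-derived carefully before the ``drop out of the $A_\infty$-relations'' step is actually justified --- the relevant cone is more naturally built from $c_3$ or $c_1$, and the homotopies enter as the off-diagonal entries of the quasi-inverse, not as the closedness obstruction for the map you wrote down. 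Since you defer the sign and degree bookkeeping to Seidel anyway, this is not a fatal gap, but as written the ``if'' direction would not close without correcting which $c_i$ sits in the twisting cochain.
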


\begin{lemma} For any choice of equivariant structures on Lagrangians, the sum of weight gradings $wt(c_1)+wt(c_2)+wt(c_3)=0$. 
\end{lemma}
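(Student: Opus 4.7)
The plan is to apply the endomorphism $\phi$ to Seidel's identity on cohomology and extract a weight constraint. Passing the relation to cohomology yields
\begin{equation*}
[e_{\mathcal{K}_\beta}] = [\mu^3(c_1,c_3,c_2)] + [\mu^2(h_2,c_2)] - [\mu^2(c_1,h_1)],
\end{equation*}
which exhibits $[e_{\mathcal{K}_\beta}]$ as a representative of the triple Massey product $\langle c_1,c_3,c_2\rangle$ modulo its standard indeterminacy $\mu^2(c_1,HF^*(\mathcal{K}_\delta,\mathcal{K}_\beta))+\mu^2(HF^*(\mathcal{K}_\beta,\mathcal{K}_\gamma),c_2)$. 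As a preliminary reduction, I observe that the sum $W := wt(c_1)+wt(c_2)+wt(c_3)$ is insensitive to the choice of equivariant structures on $\mathcal{K}_\beta,\mathcal{K}_\gamma,\mathcal{K}_\delta$: shifting by $s_\beta,s_\gamma,s_\delta$ changes the three individual weights by $s_\beta-s_\delta$, $s_\gamma-s_\beta$, $s_\delta-s_\gamma$, which telescope to zero.

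I next establish that $wt(e_{\mathcal{K}_\beta})=0$. Applying Proposition~\ref{prop:nocrossing} to a crossingless bridge diagram computing $HF^*(\mathcal{K}_\beta,\mathcal{K}_\beta)$ gives an equivariant structure under which weight equals homological grading, so the unit (in degree zero) has weight zero; since any change of equivariant structure on $\mathcal{K}_\beta$ shifts weights on both input and output of $HF^*(\mathcal{K}_\beta,\mathcal{K}_\beta)$ by the same amount, $wt(e_{\mathcal{K}_\beta})=0$ for every choice. Now apply $\phi$ to the cohomological identity: by the derivation property of $\phi$ over $\mu^2$ (Proposition~\ref{prop:product}) and $\mu^3$ (Remark~\ref{rmk:product}), each term on the right becomes a multiple of $W$ plus corrections arising from the non-cocycle nature of $h_1,h_2$. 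Explicitly, applying $\phi$ to $\mu^1(h_1)=\mu^2(c_3,c_2)$ and using the chain-level relation $\phi\mu^2(c_3,c_2)=(wt(c_2)+wt(c_3))\mu^2(c_3,c_2)+\mu^1(\phi^2(c_3,c_2))$ shows that $\phi(h_1)-(wt(c_2)+wt(c_3))h_1$ differs from a cocycle $\zeta_1$ by a $\mu^1$-boundary, and symmetrically one obtains a cocycle $\zeta_2$ from $h_2$; the resulting correction terms $\mu^2(c_1,\zeta_1)$ and $\mu^2(\zeta_2,c_2)$ lie in the Massey-product indeterminacy by construction. The equation $\phi([e_{\mathcal{K}_\beta}])=0$ then reads $W\cdot[e_{\mathcal{K}_\beta}]=\iota$ for some $\iota$ in this indeterminacy.

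The main obstacle is the concluding step: showing that $[e_{\mathcal{K}_\beta}]$ does not itself lie in the indeterminacy, for then $W\cdot[e_{\mathcal{K}_\beta}]=\iota$ forces $W=0$. This is exactly the non-degeneracy of the Abouzaid-Smith exact triangle — the identity on $\mathcal{K}_\beta$ cannot factor as $\mu^2(c_1,x)+\mu^2(y,c_2)$ for cocycles $x\in HF^*(\mathcal{K}_\delta,\mathcal{K}_\beta)$, $y\in HF^*(\mathcal{K}_\beta,\mathcal{K}_\gamma)$; equivalently, $\mathcal{K}_\beta$ is not a direct summand of $\mathcal{K}_\gamma\oplus\mathcal{K}_\delta$ through the triangle maps. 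I plan to verify this by direct computation on the local model in Figure~\ref{fig:exact}: all three Floer cohomologies involved are, after grading shifts, tensor products of copies of $H^*(S^2)$ by Corollary~\ref{cor:unlink1}, the cochains $c_1,c_2,c_3$ are determined explicitly by the local bigon regions between the $\beta,\gamma,\delta$ arcs, and one enumerates that the images of the multiplication maps $\mu^2(c_1,\cdot)$ and $\mu^2(\cdot,c_2)$ into $HF^*(\mathcal{K}_\beta,\mathcal{K}_\beta)$ both miss the unit class.
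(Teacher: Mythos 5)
Your proposal has the right overall shape --- Seidel's identity, the derivation property of $\phi$ over $\mu^2$ and $\mu^3$, the telescoping argument showing $W := wt(c_1)+wt(c_2)+wt(c_3)$ is independent of equivariant structures (a useful observation the paper leaves implicit), and $wt(e_{\mathcal{K}_\beta})=0$ --- but it is missing the one structural fact that makes the argument close, and it tries to route around that gap with a step you have not carried out.

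The key observation in the paper's proof is that $h_1$ and $h_2$ vanish for degree reasons: $h_1\in HF^0(\mathcal{K}_\gamma,\mathcal{K}_\beta)$ and $h_2\in HF^0(\mathcal{K}_\beta,\mathcal{K}_\delta)$ live in groups that are supported entirely in odd degrees (pairing $\gamma$ with $\beta$ gives a flattened braid diagram for a single half-twist $\sigma$ on an even number of strands, so the absolute grading is odd; similarly for $\beta$ paired with $\delta$), and a shift by $2$ to account for the triangle being between $\mathcal{K}_\beta$, $\mathcal{K}_\gamma$, $\mathcal{K}_\delta[2]$ does not change the parity. Once $h_1=h_2=0$, Seidel's third equation collapses to $\mu^3(c_1,c_3,c_2)=e_{\mathcal{K}_\beta}$ on the nose (all groups have vanishing $\mu^1$ by Corollary~\ref{cor:unlink1}), there is no Massey indeterminacy to control, and the derivation property gives $W = wt(e_{\mathcal{K}_\beta}) = 0$ immediately. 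Your proposal never observes this vanishing, so you are forced to grapple with the full Massey relation and an unspecified indeterminacy, and then your conclusion hinges on the claim that $[e_{\mathcal{K}_\beta}]$ does not lie in the indeterminacy subspace --- which you explicitly defer to an unwritten ``direct computation on the local model.'' That deferred step is not cosmetic: without it the proof does not close, and as stated it amounts to re-proving the non-degeneracy of the Abouzaid--Smith exact triangle by hand. Moreover, the same parity argument that gives $h_1=h_2=0$ would already show that the indeterminacy you are worried about is the zero subspace (the relevant $HF^{-1}(\mathcal{K}_\delta,\mathcal{K}_\beta)$ and $HF^0(\mathcal{K}_\beta,\mathcal{K}_\gamma)$ are supported in odd degree), so the ``local model'' computation you are postponing is precisely the degree bookkeeping you skipped. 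There is also a technical wrinkle in your ``apply $\phi$ to the cohomological identity'' step: $h_1,h_2$ are chain-level elements, not cocycles, so the derivation property of $\Phi$ (which is proved on cohomology classes in Proposition~\ref{prop:product}) does not directly apply to $\mu^2(c_1,h_1)$ and $\mu^2(h_2,c_2)$; this is a second place where knowing $h_1=h_2=0$ would have saved you. In short: insert the parity argument showing $h_1=h_2=0$, and both your route and the paper's route become the same clean three-line proof.
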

\begin{proof} We know $\mu^1$ vanishes on all the Floer groups above. If we know that $h_1=0$ and $h_2=0$, then the equation $(5.6)$ becomes $\mu^3(c_1,c_3,c_2)=e_{\mathcal{K}_\beta}$. Together with the fact weight grading is compatible with Floer product, see Remark~\ref{rmk:product}, we know that $wt(c_1)+wt(c_2)+wt(c_3)=wt(e_{\mathcal{K}_\beta})$. From \cite[Lemma 4.10]{AS16}, no matter which equivariant structure we choose on $\mathcal{K}_\beta$, the identity always has the weight grading $0$. 
To see $h_1=0$ and $h_2=0$, we need to look into the absolute grading. We claim that $HF^*(\mathcal{K}_\gamma,\mathcal{K}_\beta)$ and $HF^*(\mathcal{K}_\beta,\mathcal{K}_\delta)$ are supported in odd degrees. This is because pairing $\gamma$ with $\beta$ gives a flattened braid diagram for braid $\sigma$ with writhe $1$. The number of strands is even and thus this group is supported in odd degrees. Similarly, pairing $\beta$ with $\delta$ gives $\sigma^{-1}$, this also gives an odd degree supported group. Notice that the third map $c_3$ should have degree $2$ in the absolute grading case, thus the exact triangle is actually between $\mathcal{K}_\beta$, $\mathcal{K}_\gamma$ and $\mathcal{K}_\delta[2]$. But shifting the grading of Floer groups by $2$ does not change the fact that $h_1$ and $h_2$ have even degrees. Thus, they must be $0$. 
\end{proof}

Thus we conclude the following, 
\begin{proposition} The long exact sequence Equation~\ref{eqt:exact2}
\begin{equation}\ldots\xrightarrow{c_1}  HF^{*,wt_1}(\mathcal{K}_\alpha,\mathcal{K}_\beta)\xrightarrow{c_2} HF^{*,wt_2}(\mathcal{K}_\alpha,\mathcal{K}_\gamma)\xrightarrow{c_3} HF^{*+2,wt_3}(\mathcal{K}_\alpha,\mathcal{K}_\delta)\xrightarrow{c_1} \ldots
\end{equation}
decomposes with respect to weight gradings. 
\end{proposition}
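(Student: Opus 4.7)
The plan is to leverage the fact that the connecting maps $c_1$, $c_2$, $c_3$ in the long exact sequence are Floer products with specific cocycles which, by the preceding proposition, have well-defined weight gradings. Together with the additivity of weight under the Floer product (Proposition~\ref{prop:product}), this should immediately give that each connecting map sends the weight-$w$ part of its source into the weight-$(w + wt(c_i))$ part of its target, and the resulting decomposition will close consistently thanks to the identity $wt(c_1) + wt(c_2) + wt(c_3) = 0$ from the preceding lemma.

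More concretely, I would first decompose each Floer cohomology group $HF^*(\mathcal{K}_\alpha, \mathcal{K}_\bullet)$ into generalized eigenspaces of the weight endomorphism $\Phi$, using that we work over a characteristic zero field. The content of Proposition~\ref{prop:product} is essentially that $\Phi$ acts as a derivation with respect to $\mu^2$ on cohomology:
\begin{equation*}
\Phi(\mu^2(c, x)) = \mu^2(\Phi(c), x) + \mu^2(c, \Phi(x)).
\end{equation*}
Given that $c_i$ is an eigenvector with eigenvalue $wt(c_i)$, this Leibniz identity yields $(\Phi - wt(c_i) - w)\, \mu^2(c_i, x) = \mu^2(c_i, (\Phi - w) x)$, and a straightforward induction then shows that Floer product with $c_i$ sends the generalized $w$-eigenspace into the generalized $(w + wt(c_i))$-eigenspace. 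This is the step promoting the eigenvector statement of Proposition~\ref{prop:product} to generalized eigenspaces, which is what is actually needed since the weight grading is defined via generalized eigenspace decomposition.

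With this in hand, each connecting map respects the generalized eigenspace decomposition, so the long exact sequence splits as a direct sum of three-term periodic sequences indexed by triples $(wt_1, wt_2, wt_3)$ satisfying $wt_2 = wt_1 + wt(c_2)$, $wt_3 = wt_2 + wt(c_3)$, and $wt_1 = wt_3 + wt(c_1)$; the last relation is automatically consistent with the first two precisely because of the preceding lemma $wt(c_1) + wt(c_2) + wt(c_3) = 0$. Each subsequence inherits exactness from the original sequence, as a direct summand of an exact sequence of vector spaces.

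The main obstacle I anticipate is promoting the Leibniz identity from the eigenvector statement of Proposition~\ref{prop:product} to one that acts cleanly on generalized eigenspaces with the correct signs. The derivation identity itself is implicit in the proof of Proposition~\ref{prop:product}, where the three cobordism degenerations in the first row of Figure~\ref{fig:product} give, up to $\mu^1$-terms that vanish in cohomology, exactly the three terms $\Phi(\mu^2(x_1, x_2))$, $\mu^2(\Phi(x_1), x_2)$, and $\mu^2(x_1, \Phi(x_2))$. Once this identity is isolated in the displayed form above on cohomology, the rest of the argument is formal linear algebra on generalized eigenspace decompositions.
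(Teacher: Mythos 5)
Your proposal follows essentially the same route as the paper: use that each $c_i$ has a well-defined weight grading (Proposition~\ref{prop:es}), that weight is additive under the Floer product (Proposition~\ref{prop:product}), and that $wt(c_1)+wt(c_2)+wt(c_3)=0$, so the sequence closes up consistently. The one place you go beyond the paper's terse argument is in explicitly promoting the derivation identity $\Phi(\mu^2(c,x)) = \mu^2(\Phi(c),x) + \mu^2(c,\Phi(x))$ from an eigenvector statement to generalized eigenspaces via $(\Phi - wt(c_i) - w)\mu^2(c_i,x) = \mu^2(c_i,(\Phi-w)x)$ and induction; since the weight grading is genuinely defined through generalized eigenspaces, this added care is warranted and fills a small step that the paper's proof leaves implicit.
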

\begin{proof} From Proposition~\ref{prop:es}, we know that there is a well-defined weight grading for $c_1$, $c_2$ and $c_3$. If we start with the first group $ HF^{*,wt_1}(\mathcal{K}_\alpha,\mathcal{K}_\beta)$, the only non-trivial map will be at weight grading $wt_1+wt(c_2)$ because the weight grading is compatible with Floer products. The same goes for $wt_3=wt_1+wt(c_3)+wt(c_2)$. The next weight grading should be $wt_1+wt(c_2)+wt(c_3)+wt(c_1)=wt_1$, which is exactly where we started with the first group. 
\end{proof}

\section{Proof of the main theorem via a bigraded isomorphism}\label{ch5}

Now we have enough to prove our main theorems, Theorem~\ref{thm:main}
\begin{theorem}\label{thm:bigrading} Symplectic Khovanov cohomology, graded by $(gr,wt)$, and Khovanov homology, graded by $(i,j)$, are isomorphic as bigraded vector spaces over any characteristic zero field, where the gradings are related by $gr=i-j$ and $wt=-j+c$, with an ambiguity of a grading shift $c$ on relative weight grading. \end{theorem}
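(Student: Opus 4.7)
The plan is to prove Theorem~\ref{thm:bigrading} by induction on the number of crossings $n_c$ of a bridge diagram $D$ representing $L$, comparing the bigraded unoriented skein exact sequence from Section~\ref{ch4} against the standard unoriented skein exact triangle of combinatorial Khovanov homology, and using the singly-graded Abouzaid--Smith isomorphism as the comparison map between triangles.

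The base case $n_c=0$ is immediate from Proposition~\ref{prop:nocrossing}: for a crossingless diagram of an unlink $U_k$ one can choose equivariant structures on $\mathcal{K}_\alpha$ and $\mathcal{K}_\beta$ so that $wt(x)=gr(x)$ for every generator. Combined with the explicit formula $Kh^{*,*}(U_k)\cong\bigotimes^k(\mathbf{k}_{(0,1)}\oplus\mathbf{k}_{(0,-1)})$ displayed after Proposition~\ref{prop:nocrossing}, the substitution $gr=i-j$, $wt=-j$ identifies the two bigraded groups with $c=0$.

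For the inductive step, I would single out a crossing of $D$ and form the three bridge diagrams $L_+$, $L_0$, $L_\infty$, the latter two having $n_c-1$ crossings. The symplectic bigraded skein triangle of Section~\ref{ch4} gives
\begin{equation}
\ldots\xrightarrow{c_1} HF^{*,wt_1}(L_+)\xrightarrow{c_2} HF^{*,wt_2}(L_0)\xrightarrow{c_3} HF^{*+2,wt_3}(L_\infty)\xrightarrow{c_1}\ldots
\end{equation}
while the unoriented Khovanov skein triangle is bigraded by $(i,j)$. Placing the singly-graded Abouzaid--Smith isomorphism as the vertical maps, one obtains a commutative ladder (since the symplectic triangle is the image of the triangle of bimodules whose combinatorial shadow is precisely the skein triangle). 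The inductive hypothesis applied at $L_0$ and $L_\infty$, combined with the bigraded decomposition of the symplectic triangle and a five-lemma applied in each fixed weight, then forces the Abouzaid--Smith isomorphism at $L_+$ to respect the bigrading as well.

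The main obstacle is matching each individual weight shift $wt(c_i)$ against the corresponding quantum shift in the combinatorial triangle. Additivity $wt(c_1)+wt(c_2)+wt(c_3)=0$, already established via Proposition~\ref{prop:product} and Remark~\ref{rmk:product}, mirrors the analogous identity for quantum shifts. To pin down each shift individually, I would exploit Corollary~\ref{cor:unlink1}: each of $CF^*(\mathcal{K}_\gamma,\mathcal{K}_\beta)$, $CF^*(\mathcal{K}_\delta,\mathcal{K}_\gamma)$, $CF^*(\mathcal{K}_\beta,\mathcal{K}_\delta)$ arises from an $(n{-}1)$-component unlink diagram, so its weight grading equals its homological grading up to the controlled shift determined by Section~\ref{3.3} (using that $\beta$ paired with $\gamma$ and $\beta$ paired with $\delta$ give flattened braid diagrams of $\sigma^{\pm 1}$). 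Reading off the homological degrees of the classes representing $c_1,c_2,c_3$ and comparing them to the quantum degrees of the unit/counit maps that generate the combinatorial skein triangle completes the match. The ambiguity of the constant $c$ in the statement simply absorbs the cumulative effect on equivariant structures required at each inductive step, since we only track the weight grading relatively.
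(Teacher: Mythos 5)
Your proposal matches the paper's strategy closely: both argue by induction on the crossing number, using the bigraded symplectic skein triangle from Section~\ref{ch4}, the singly-graded Abouzaid--Smith isomorphism as the vertical comparison map, the compatibility of that isomorphism with the skein triangles, and a five-lemma on fixed-grading pieces, with Proposition~\ref{prop:nocrossing} supplying the base case. The one place you add something the paper does not is the last paragraph: you propose to pin down each weight shift $wt(c_i)$ individually by combining Corollary~\ref{cor:unlink1} with the absolute grading of Section~\ref{3.3} and a comparison with the quantum degrees of the combinatorial unit/counit maps. The paper avoids this: it only records that the $c_i$ have \emph{fixed} weight gradings once equivariant structures are chosen, and then checks that shifting the input Jones grading $j_1$ by $k$ shifts the target gradings $j_3$ and $j'_3$ by $\pm k$ compatibly, which is all one needs for a statement that is only asserted up to a global shift $c$. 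Your more explicit computation would carry extra information (and is consistent with the paper's computation that $HF^*(\mathcal{K}_\gamma,\mathcal{K}_\beta)$ and $HF^*(\mathcal{K}_\beta,\mathcal{K}_\delta)$ are supported in odd degrees), but it is not needed, and it reintroduces a dependence on particular equivariant structures that the relative-grading argument sidesteps. One small caveat, common to your write-up and the paper's: before the five lemma can be applied in a fixed weight, one must know that $H$ carries the weight-$j'_3$ piece of $HF^{*+1}(\mathcal{K}_\alpha,\mathcal{K}_\beta)$ \emph{into} the Jones-$j_3$ piece of $Kh^{*+1}(L_+)$; that follows from commutativity with $c_1$, $c_3$ and the inductive hypothesis but deserves a sentence, since a map of short exact sequences that is graded on sub and quotient need not be graded on the middle term without such an argument.
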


Our isomorphism is a graded refinement of the isomorphism in \cite{AS19}. 
\begin{proposition}\cite[Theorem 7.5]{AS19} For any bridge diagram $L$, we have an isomorphism $H$ such that
\begin{equation} H: Kh^*_{symp}(L)\to Kh^*(L)
\end{equation}
\end{proposition}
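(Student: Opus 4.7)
The plan is to proceed by induction on the number of crossings $n$ of a bridge diagram representing $L$. The base case $n=0$ is essentially Proposition~\ref{prop:nocrossing}: for a crossingless diagram of an unlink $U_k$, one can choose equivariant structures so that $wt(x)=gr(x)$ on every generator. This matches the combinatorial Khovanov homology $Kh^{*,*}(U_k)=\bigotimes^k(\textbf{k}_{(0,1)}\oplus\textbf{k}_{(0,-1)})$ under the prescribed relations $gr=i-j$ and $wt=-j+c$ (with $c=0$ for this choice of equivariant data), verifying the bigraded isomorphism for all unlinks.

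For the inductive step, pick a crossing of a diagram with $n$ crossings. Applying the bigraded unoriented skein exact triangle of Section~\ref{ch4} produces $L_+$, $L_0$, and $L_\infty$, where the two resolutions have strictly fewer crossings, and the triangle decomposes according to the weight grading with a fixed shift determined by $wt(c_1)$, $wt(c_2)$, $wt(c_3)$ summing to zero. On the Khovanov side, the classical unoriented skein exact triangle is bigraded, and its grading shifts for the three terms are known combinatorially. Using the Abouzaid--Smith isomorphism $H$ at all three terms produces a commutative ladder between the two long exact sequences. By the induction hypothesis, $H$ is a bigraded isomorphism at $L_0$ and $L_\infty$; applying the five lemma in each fixed weight grading then promotes $H$ at $L_+$ to a bigraded isomorphism as well, with the correction term $c$ for $L_+$ being determined by the inductive ones together with the fixed weight shifts $wt(c_i)$.

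The main obstacle will be verifying that the commutative ladder genuinely respects the bigrading. The vertical maps $H$ and the connecting maps on both sides are defined through very different geometric and combinatorial prescriptions, and one must check that $H$ intertwines the symplectic connecting maps (Floer products with $c_1$, $c_2$, $c_3$) with the combinatorial skein maps up to the appropriate weight shift. This should follow from two facts established earlier: first, Proposition~\ref{prop:product} and Remark~\ref{rmk:product} assert that the weight grading is additive under Floer products (and higher $A_\infty$ products), so multiplication by any $c_i$ moves weight by exactly $wt(c_i)$; second, the weight shifts $wt(c_1)$, $wt(c_2)$, $wt(c_3)$ computed in Section~\ref{ch4} match the combinatorial Jones shifts in Khovanov's skein triangle, forcing each horizontal step of the commutative ladder to preserve the matching of weight and Jones gradings.

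The remaining routine steps are: tracking the dependence of the correction $c$ on the diagram through the induction (it changes only by fixed amounts dictated by the $wt(c_i)$, since any two choices of equivariant structures merely shift the overall weight grading by a constant), and confirming that the five lemma argument respects not only each fixed weight grading but also the homological grading $gr$, which the bigraded skein triangle of Section~\ref{ch4} already tracks explicitly. Combined with the base case, this completes the proof that $H$ upgrades to a bigraded isomorphism $(gr,wt)\cong(i-j,-j+c)$.
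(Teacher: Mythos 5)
This proposition is a cited result, namely Theorem~7.5 of \cite{AS19}, and the paper does not re-prove it; it simply invokes it and then notes (in the surrounding text) that, combined with Proposition~\ref{prop:rela}, the isomorphism $H$ is canonical for any bridge diagram, not just braid closures. Your proposal is therefore not a proof of the statement at hand but rather a sketch of the proof of the paper's main result, Theorem~\ref{thm:bigrading}, which is the \emph{bigraded} refinement of this proposition.

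The argument as written is also circular if read as a proof of the existence of $H$. You write ``Using the Abouzaid--Smith isomorphism $H$ at all three terms produces a commutative ladder,'' which presupposes the very maps whose existence the proposition asserts, and then apply the five lemma to conclude that ``$H$ at $L_+$'' is an isomorphism---but the five lemma requires the vertical map at $L_+$ to already exist and make the squares commute. The deeper issue is that the commutativity of that ladder is precisely the hard content of \cite{AS19}: it rests on identifying the symplectic and combinatorial cup/cap functors via the arc algebra, not merely on the weight-additivity of Floer products (Proposition~\ref{prop:product}) or on the numerics of the shifts $wt(c_i)$, which are the two facts your proposal appeals to. Those two facts control how the bigrading behaves under the triangle, but they do nothing to construct the singly-graded isomorphism $H$ or to show that the connecting maps correspond. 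For this proposition, the honest answer is ``by \cite[Theorem 7.5]{AS19}''; the five-lemma induction you outline belongs (and is used in the paper) at the later stage of proving that $H$ respects the weight grading.
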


In \cite{AS19}, we can only conclude from the original argument of Abouzaid and Smith that $H$ is canonical for braid closures. With Proposition~\ref{prop:rela} that symplectic Khovanov cohomology is defined canonically (and the same for combinatorial Khovanov homology), we can claim that $H$ is canonical for any link diagram. This is an isomorphism with only information on the homological grading. But the result of Abouzaid and Smith implies that the long exact sequence in Equation~\ref{eqt:exact2} commutes with the corresponding long exact sequence in Khovanov homology.

\begin{lemma} Fix a link diagram $L_+$ and its unoriented resolutions $L_0$ and $L_\infty$ at one of the crossings. We represent their corresponding bridge diagrams with $(\alpha,\beta)$, $(\alpha,\gamma)$ and $(\alpha,\delta)$-arcs respectively such that $\alpha$, $\beta$, $\gamma$ and $\delta$ are locally shown in Figure~\ref{fig:exact}. The isomorphism $H$ is compatible with the exact sequence Equation~\ref{eqt:exact2}, i.e. the following diagram commutes
\[\begin{tikzcd}
\hspace{-1cm}HF^{*}(\mathcal{K}_\alpha,\mathcal{K}_\gamma) \arrow{r} \arrow[swap]{d}{H} & HF^{*+2}(\mathcal{K}_\alpha,\mathcal{K}_\delta) \arrow{r} \arrow[swap]{d}{H}& HF^{*+1}(\mathcal{K}_\alpha,\mathcal{K}_\beta) \arrow{r} \arrow[swap]{d}{H}& HF^{*+1}(\mathcal{K}_\alpha,\mathcal{K}_\gamma) \arrow{r} \arrow[swap]{d}{H}&HF^{*+3}(\mathcal{K}_\alpha,\mathcal{K}_\delta) \arrow{d}{H} \\
\hspace{-1cm}Kh^*(L_0) \arrow{r} & Kh^{*+2}(L_\infty) \arrow{r} &Kh^{*+1}(L_+) \arrow{r} &Kh^{*+1}(L_0) \arrow{r} &Kh^{*+3}(L_\infty) 
\end{tikzcd}
\]

where the first line is the exact sequence of Equation~\ref{eqt:exact2} and the second line is the exact sequence for combinatorial Khovanov homology with grading $i-j$.
\end{lemma}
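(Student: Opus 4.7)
The plan is to deduce the commutativity from the fact that the Abouzaid--Smith isomorphism $H$ is not merely a vector-space isomorphism but an isomorphism intertwining richer algebraic structures. Specifically, $H$ is induced by a quasi-equivalence between the subcategory of the Fukaya category of $\mathscr{Y}_n$ generated by crossingless-matching Lagrangians and Khovanov's arc algebra $H^n$, under which the bimodules associated to the identity braid, the cup-cap tangle, and an elementary positive half-twist $\sigma$ correspond to Khovanov's bimodules for the same tangles. The commutativity is then a formal consequence of naturality, once one traces how the two skein triangles are obtained from the same bimodule triangle.

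First I would recall that the symplectic exact triangle of Equation~\ref{eqt:exact2} arises by evaluating the Abouzaid--Smith exact triangle of bimodules (identity, cup-cap, half-twist) first on the right at $\mathcal{K}_\gamma$, producing the exact triangle of one-sided modules featured in the proof of their Proposition~7.4, and then on the left at $\mathcal{K}_\alpha$. Next, I would observe that combinatorial Khovanov homology, in Khovanov's tangle-bimodule framework, is given by the analogous evaluation of the unoriented skein exact triangle of bimodules over $H^n$, with connecting maps given by the combinatorial cap, cup, and saddle cobordism morphisms. Since the Abouzaid--Smith quasi-equivalence matches these three bimodules and their connecting cocycles, it sends the symplectic bimodule triangle to a chain-level model of the Khovanov bimodule triangle. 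Applying the same evaluation functor on both sides yields a map of exact triangles of modules, whose induced long exact sequences are precisely the two rows of the diagram. Functoriality of the evaluation gives the required commutativity of the squares at the level of cohomology.

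To carry this out concretely, I would identify the distinguished cocycles $c_1, c_2, c_3$ implementing the symplectic triangle (Floer products) with the corresponding combinatorial structure maps on the Khovanov side. By the preceding lemma, each pair among $\beta, \gamma, \delta$ represents an $(n-1)$-component unlink whose Floer cochain complex is $\bigotimes^{n-1} H^*(S^2)$ with trivial differential, so $c_1, c_2, c_3$ are elementary generators localized in the disk of Figure~\ref{fig:exact}. Under the Abouzaid--Smith equivalence these generators are sent to the corresponding generators of the arc-algebra bimodule morphisms, which by construction implement the Khovanov cup, cap, and saddle maps locally. Because $H$ is a canonical isomorphism at the level of any bridge diagram (combining Proposition~\ref{prop:rela} with the Abouzaid--Smith identification of braid-closure bimodules), the naturality square for each connecting map lifts from chain level to cohomology.

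The hard part, and really the only nontrivial content, is the chain-level identification of the three bimodules and the matching of the distinguished cocycles $c_i$ with their combinatorial counterparts; this is essentially established in \cite{AS19}, so the lemma is best viewed as an explicit corollary of their bimodule-level comparison rather than an independent geometric statement. The remaining verification, that applying $\mathrm{Hom}(\mathcal{K}_\alpha,-)$ to a commutative diagram of bimodule triangles yields a commutative diagram of long exact sequences, is a standard homological-algebra fact about exact triangles in the derived category of modules over an $A_\infty$-algebra.
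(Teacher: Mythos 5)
Your proposal is essentially the same as the paper's argument: both rest on the Abouzaid--Smith identification of the cup/cap bimodules and on the fact that the half-twist module (and hence $Kh^*_{symp}(L_+)$ and $Kh^*(L_+)$) is realized as the mapping cone of the cap-cup map in each theory, so that $H$ commutes with the connecting maps by construction. The one place you are slightly looser than the paper is in asserting up front that the quasi-equivalence "matches the three bimodules and their connecting cocycles"; the paper is more careful to note that only the cup functors (and hence caps, by adjunction) are directly identified in \cite{AS19}, with the first square commuting for that reason, and the half-twist entry then matched only through the cone construction, which is what yields the remaining squares.
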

\begin{proof} In the proof of the Abouzaid-Smith isomorphism, they show that the cup functors of Khovanov and symplectic Khovanov are identified with the isomorphism in the arc algebra. Cap functors in both cases are adjoint to the corresponding cup functors. The horizontal maps in the first squares are given by applying the same cap-cup functor in each case, and thus they commute with the isomorphisms in the first square. 

The diagram naturally commutes if we replace the third group of each row with the mapping cone of the other two. The third group of each row is isomorphic to a mapping cone of the other two, see \cite[Proposition 7.4]{AS19}. Moreover, in the proof of \cite[Theorem 7.6]{AS19}, the isomorphism $H$ and the long exact sequences are constructed via the mapping cones and thus factor through the cones of horizontal maps of the first square. As a result, the second and third squares are also commutative.
\end{proof}

\begin{corollary} The diagram involving the exact sequences of resolving a negative crossing is also commutative. 
\end{corollary}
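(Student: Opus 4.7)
The plan is to deduce the negative-crossing statement by rotating the cyclic exact triangle of Lagrangians that underlies the lemma, after which the proof of the lemma transports verbatim. In the local picture of Figure~\ref{fig:exact}, relabeling so that pairing $\alpha$ with $\delta$ realizes a negative crossing $L_-$ (while the resolutions $L_0$ and $L_\infty$ are produced by pairing $\alpha$ with the remaining two arcs) is purely a cyclic relabeling of the triangle: the Lagrangians $\mathcal{K}_\beta,\mathcal{K}_\gamma,\mathcal{K}_\delta$ in $\mathscr{Y}_n$ are unchanged, only the distinguished vertex is shifted. Pairing this rotated triangle with $\mathcal{K}_\alpha$ produces a long exact sequence
\[ \ldots \to HF^{*}(\mathcal{K}_\alpha,\mathcal{K}_\gamma)\to HF^{*+2}(\mathcal{K}_\alpha,\mathcal{K}_\delta)\to HF^{*+1}(\mathcal{K}_\alpha,\mathcal{K}_\beta)\to \ldots \]
which, after identification of $\mathcal{K}_\delta$ with the Lagrangian representative of $L_-$, is the symplectic unoriented skein triangle at a negative crossing.

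Next, I would re-run the three-step argument of the lemma. First, the first square of the rotated diagram commutes because the horizontal maps in both theories are given by applying the same cup/cap functor identified by Abouzaid--Smith; this identification is symmetric in the three vertices of the triangle, so it applies to any of the three connecting maps $c_1,c_2,c_3$, not only to the ones singled out in the positive-crossing case. Second, the adjointness of cap to cup, which was used to obtain commutativity of the second square, is likewise intrinsic to the triangle. Third, the third group in each row is, up to shift, the mapping cone of the first two horizontal maps, and the Abouzaid--Smith isomorphism $H$ is constructed through these cones in \cite[Proof of Theorem 7.6]{AS19}, so the remaining square is automatic.

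The principal subtlety I expect to encounter is matching the homological grading shifts between the rotated symplectic triangle and the Khovanov skein sequence at a negative crossing: the cyclic rotation of the Lagrangian triangle introduces a shift by $1$ (respectively $2$) which must line up with the well-known shift difference between the positive and negative crossing skein triangles. This is, however, a routine bookkeeping check, entirely parallel to the parity computation carried out in the previous subsection, where the flattened braid calculations for $\sigma$ and $\sigma^{-1}$ were used to show that $HF^*(\mathcal{K}_\gamma,\mathcal{K}_\beta)$ and $HF^*(\mathcal{K}_\beta,\mathcal{K}_\delta)$ are supported in odd degrees; the same parity argument applies in any rotation of the triangle and forces the auxiliary classes $h_1,h_2$ to vanish again. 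Thus no new geometric input is needed, and the corollary follows formally from the lemma.
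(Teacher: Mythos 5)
Your core strategy --- transporting the three-step argument of the lemma (cup/cap-functor identification, adjointness, mapping-cone factorization) --- is what the corollary must rest on, and since the paper supplies no explicit proof, that part is on target. The weak point is the reduction-by-rotation. The three arcs realize, as links, $L_+$ (via $\beta$), $L_0$ (via $\gamma$), and $L_\infty$ (via $\delta$; note that the lemma's commutative diagram places $HF^*(\mathcal{K}_\alpha,\mathcal{K}_\delta)$ over $Kh^*(L_\infty)$, notwithstanding the sentence earlier in the section that reads ``$\alpha$ with $\delta$ gives $L_-$''). Cyclically rotating the triangle $(\mathcal{K}_\beta,\mathcal{K}_\gamma,\mathcal{K}_\delta)$ permutes these three objects up to shift but never introduces $L_-$ as a vertex, and $L_+$ is certainly not a resolution of $L_-$, so one cannot declare $(\alpha,\beta)$ and $(\alpha,\gamma)$ to be ``the resolutions $L_0$ and $L_\infty$'' of a negative crossing: the relabeling you describe is not available.

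At the bimodule level this is the statement that the positive crossing is governed by the half-twist $\tau$ while the negative crossing is governed by $\tau^{-1}$, and these sit in \emph{different} exact triangles related by inversion/duality, not by cyclic rotation of a single triangle. The correct reduction replaces $\beta$ by a new arc $\beta'$ realizing $L_-$ and uses the exact triangle for $(\mathcal{K}_{\beta'},\mathcal{K}_\gamma,\mathcal{K}_\delta)$; the lemma's three ingredients apply to this triangle, and the parity computation forcing $h_1,h_2=0$ is re-run with $\sigma^{-1}$ in the role that $\sigma$ played (as the lemma's own proof already does for one of the two pairs). Once the reduction is repaired in this way, the grading bookkeeping you outline does go through; the gap is in the rotation step, not in the ultimate argument.
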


We compared the bigradings of two theories at the end of Section~\ref{3.4} for unlinks:

\begin{proposition}\cite[Proposition 6.11]{AS16} \label{prop:unlink} The isomorphism $H$ preserves the weight grading if $D$ is crossingless diagram, with grading correspondence $gr=i-j$ and $wt=-j$. 
\end{proposition}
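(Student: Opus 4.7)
The plan is to combine the Abouzaid--Smith purity result, already recorded above as Proposition~\ref{prop:nocrossing}, with the explicit form of both cohomology theories on a crossingless diagram. Purity provides, for any crossingless bridge diagram $D$ of an unlink, a choice of equivariant structures $c_\alpha$ on $\mathcal{K}_\alpha$ and $c_\beta$ on $\mathcal{K}_\beta$ such that every $x \in HF^*(\mathcal{K}_\alpha, \mathcal{K}_\beta)$ satisfies $wt(x) = gr(x)$. On the combinatorial side, the cube of resolutions of a crossingless diagram collapses to a single vertex, so $Kh^{*,*}(U_k)$ is concentrated in homological degree $i = 0$. Since the Abouzaid--Smith isomorphism $H$ is already known to match homological gradings via $gr = i - j$, this forces $gr = -j$ on every generator, and combining with purity yields $wt = gr = -j$, as required.

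To make the matching rigorous at the level of generators I would invoke the two explicit presentations displayed just before Proposition~\ref{prop:product}: namely $Kh^{*,*}(U_k) = \bigotimes^k(\textbf{k}_{(0,1)} \oplus \textbf{k}_{(0,-1)})$ in $(i,j)$-coordinates, while the purity computation gives $Kh^{*,*}_{symp}(U_k) = \bigotimes^k(\textbf{k}_{(1,1)} \oplus \textbf{k}_{(-1,-1)})$ in $(gr, wt)$-coordinates. The change of variables $(gr, wt) = (i-j, -j)$ carries the latter term-by-term onto the former, so the two tensor factorizations of the Floer and combinatorial complexes are matched factor-by-factor. Concretely, the two intersection points of $\mathcal{K}_\alpha$ with a small perturbation of $\mathcal{K}_\beta$ on each crossingless circle component correspond to the two generators $\mathbf{1}$ and $X$ of the Khovanov Frobenius algebra in the usual dictionary, and the gradings agree under the specified equivariant structures.

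The subtle point --- which is really the content of Abouzaid--Smith's original proof and the step I would expect to be the main obstacle in any fresh attempt --- is the purity statement itself. To establish it one must analyze the $nc$ vector field $\phi$ on a product of vanishing-cycle spheres: first note that equivariant structures exist, since $H^1(S^2)^{\otimes n} = 0$, and that they form an affine space over $H^0 \cong \textbf{k}$; then compute the linear part $b^1$ via a Gromov--Witten style count of finite-energy holomorphic discs in $\bar{\mathscr{Y}}_n$ constrained to meet $D_0$ and $D_0'$ once each and avoid $D_r$. The homological-degree dependence of this count, after absorbing the remaining scalar freedom into the $c_\alpha$ and $c_\beta$, is precisely what produces $wt = gr$ on a crossingless matching. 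Once Proposition~\ref{prop:nocrossing} is granted, the grading-arithmetic reduction above completes the proof of Proposition~\ref{prop:unlink}.
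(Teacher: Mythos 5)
Your proposal is correct and follows the same route as the paper: both reduce to the Abouzaid--Smith purity statement (Proposition~\ref{prop:nocrossing}) together with the explicit generator-level presentations of $Kh^{*,*}(U_k)$ and $Kh^{*,*}_{symp}(U_k)$ given at the end of Section~\ref{3.4}, and both conclude by the arithmetic $i=0 \Rightarrow gr=-j$, hence $wt=gr=-j$. The only difference is presentational: you make the step $i=0 \Rightarrow gr=-j$ explicit where the paper leaves it implicit in the displayed tensor factorizations.
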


\begin{proof}[Proof of Theorem~\ref{thm:main}] We only need to prove that for any fixed Jones grading $j_0$, $H$ is also an isomorphism with $wt=-j_0+c$ with some grading shift $c$, 
\begin{equation} H: Kh^{*,-j_0}_{symp}(L)\to Kh^{*,j_0}(L)
\end{equation}
We prove that this statement is true for any link (bridge) diagram by induction on the number of crossings. The base case for unlinks is proved with Proposition~\ref{prop:unlink}. 

Now we assume that $L_+$ is a link diagram with n crossings, whereas its resolutions $L_0$ and $L_\infty$ have $(n-1)$ crossings. Let us assume we are doing resolutions at a positive crossing, if all crossings are negative, a similar argument can be applied for the exact sequences induced by doing resolution at a negative crossing. From the inductive assumption, $H$ are bigraded isomorphisms for $L_0$ and $L_\infty$. Let us fix a Jones grading $j_1$ on $Kh(L_0)$. The maps in the long exact sequence will be trivial unless the Jones grading $j_2$ on $Kh(L_\infty)$ is $j_1-3v-2$, where $v$ is the signed count of crossings between this arc and other components, and $j_3$ on $Kh(L_+)$ to be $j_1+1$. Thus we can decompose our commutative diagram with respect to the Jones grading: 

\small\[\begin{tikzcd}
\hspace{-2.5cm}HF^{*,-j_1}(\mathcal{K}_\alpha,\mathcal{K}_\gamma) \arrow{r}{c_2} \arrow[swap]{d}{H} & HF^{*+2,-j_2}(\mathcal{K}_\alpha,\mathcal{K}_\delta) \arrow{r}{c_3} \arrow[swap]{d}{H}& HF^{*+1,j'_3}(\mathcal{K}_\alpha,\mathcal{K}_\beta) \arrow{r}{c_1} \arrow[swap]{d}{H}& HF^{*+1,-j_1}(\mathcal{K}_\alpha,\mathcal{K}_\gamma) \arrow{r}{c_2} \arrow[swap]{d}{H}&HF^{*+3,-j_2}(\mathcal{K}_\alpha,\mathcal{K}_\delta) \arrow{d}{H} \\
\hspace{-1.5cm}Kh^{*,j_1}(L_0) \arrow{r} & Kh^{*+2,j_2}(L_\infty) \arrow{r} &Kh^{*+1,j_3}(L_+) \arrow{r} &Kh^{*+1,j_1}(L_0) \arrow{r} &Kh^{*+3,j_2}(L_\infty) 
\end{tikzcd}
\]
\normalsize where the weight grading $j'_3$ is given by $-j_2+wt(c_3)$. The first, second, fourth, and fifth columns are all isomorphisms, so by the five lemma, we conclude that the third column is also an isomorphism and thus we know $H$ is also a bigraded isomorphism for $L_+$. As for the grading correspondence, because the $c_i$ all have fixed weight grading after specifying the choice of equivariant structures, if we change $j_1$ by any number $k$, we change $j_3$ also by $k$. As for the first row, if $-j_1$ is changed into $-j_1-k$, this will result in $j'_3$ shifting by $-k$ as well. This is enough to show that as a relative grading, the weight grading recovers the Jones grading as a relative grading. 


\end{proof}

As a corollary of the theorem above, we can also conclude Theorem~\ref{thm:relative} that the relative weight grading is independent of the choice of link diagrams.
\begin{proof}[Proof of Theorem~\ref{thm:relative}] For any two bridge diagrams $D$ and $D'$ representing a link $L$, relative weight grading $wt$ and $wt'$ can be defined on $D$, and respectively $D'$. Theorem~\ref{thm:main} indicates that both $wt$ and $wt'$ coincide with $-j$ with as relative gradings. Thus $wt$ and $wt'$ are the same as relative gradings.  
\end{proof}

At the writing of this paper, the author can not provide a pure symplectic proof of Theorem~\ref{thm:relative} without referring to the invariance of the bigrading on Khovanov homology. Such a proof consists of the invariance of weight grading under isotopy, handeslide and stabilization. 

Isotopy and handleslide invariance could be potentially proved via Proposition~\ref{prop:product}, if we look close enough into the bridge diagrams and the Floer products. Isotopy and handleslide invariance can also be deduced from Hamiltonian isotopy invariance. But general Hamiltonian isotopy invariance will require virtual perturbations because some of the transversality assumptions will not be preserved under general isotopies, say, $L$ might bound some Maslov zero discs after isotopy.  

Stabilization invariance is harder than the other two. It requires some degeneration arguments in the Hilbert scheme setup, relating holomorphic discs in $\mathscr{Y}_n$ and discs in $\mathscr{Y}_{n-1}$ so that differentials in the stabilized diagram can be identified with differentials in the original diagram. However, degeneration arguments are only used in the original nilpotent slice definition of Seidel and Smith, while the weight grading is defined using Hilbert schemes. 

\bibliographystyle{unsrt}
\bibliography{references}

\begin{thebibliography}{10}

\bibitem{SS04}
Paul Seidel and Ivan Smith.
\newblock {A link invariant from the symplectic geometry of nilpotent slices}.
\newblock {\em Duke Math. J.}, 134(3):453--514, 2006.

\bibitem{AS19}
Mohammed Abouzaid and Ivan Smith.
\newblock {Khovanov homology from Floer cohomology}.
\newblock {\em J. Amer. Math. Soc.}, 32(1):1--79, 2019.

\bibitem{M04}
Ciprian Manolescu.
\newblock Nilpotent slices, {H}ilbert schemes, and the {J}ones polynomial.
\newblock {\em Duke. Math. J.}, 132(2):311--369, 2006.

\bibitem{HLS15}
Kristen Hendricks, Robert Lipshitz, and Sucharit Sarkar.
\newblock A flexible construction of equivariant {F}loer homology and
  applications.
\newblock {\em J. Topol}, 9(4):1153--1236, 10 2015.

\bibitem{W09}
Jack~W. Waldron.
\newblock {An invariant of link cobordisms from symplectic Khovanov homology}.
\newblock {\em arXiv:0912.5067}, 2009.

\bibitem{AS16}
Mohammed Abouzaid and Ivan Smith.
\newblock {The symplectic arc algebra is formal}.
\newblock {\em Duke Math. J.}, 165(6):985--1060, 2016.

\bibitem{N99}
Hiraku Nakajima.
\newblock Lectures on {H}ilbert schemes of points on surfaces.
\newblock {\em Amer. Math. Soc}, 18, 1999.

\bibitem{P99}
M~Pozniak.
\newblock Floer homology, novikov rings and clean intersections.
\newblock {\em Amer. Math. Soc}, 196:199--181, 1999.

\bibitem{W73}
Alan Weinstein.
\newblock Lagrangian submanifolds and {H}amiltonian systems.
\newblock {\em Annals of Math}, 98:377--410, 1973.

\bibitem{DW09}
Jean~Marie Droz and Emmanuel Wagner.
\newblock Grid diagrams and {K}hovanov homology.
\newblock {\em Algebr. Geom. Topol.}, 9(3):1275--1297, 2009.

\bibitem{SS12}
Paul Seidel and Jake Solomon.
\newblock {Symplectic Cohomology and q-Intersection Numbers}.
\newblock {\em Geom. Funct. Anal.}, 22(2):443--477, 2012.

\bibitem{Sei08}
Paul Seidel.
\newblock {\em {Fukaya categories and Picard-Lefschetz theory}}.
\newblock Zurich Lectures in Advanced Mathematics. European Mathematical
  Society, Z{\"u}rich, 2008.

\end{thebibliography}

\end{document}